\chardef\@x10\chardef\@xv60
\def\tcitime{
\def\@time{%
  \@minute\time\@hour\@minute\divide\@hour\@xv
  \ifnum\@hour<\@x 0\fi\the\@hour:%
  \multiply\@hour\@xv\advance\@minute-\@hour
  \ifnum\@minute<\@x 0\fi\the\@minute
  }}%
\def\x@hyperref#1#2#3{%
   \catcode`\~ = 12
   \catcode`\$ = 12
   \catcode`\_ = 12
   \catcode`\# = 12
   \catcode`\& = 12
   \y@hyperref{#1}{#2}{#3}%
}
\def\y@hyperref#1#2#3#4{%
   #2\ref{#4}#3
   \catcode`\~ = 13
   \catcode`\$ = 3
   \catcode`\_ = 8
   \catcode`\# = 6
   \catcode`\& = 4
}
\def\QCTOpt[#1]#2{%
  \def\QCTOptB{#1}
  \def\QCTOptA{#2}
}
\def\QCTNOpt#1{%
  \def\QCTOptA{#1}
  \let\QCTOptB\empty
}
\def\Qct{%
  \@ifnextchar[{%
    \QCTOpt}{\QCTNOpt}
}
\def\QCBOpt[#1]#2{%
  \def\QCBOptB{#1}%
  \def\QCBOptA{#2}%
}
\def\QCBNOpt#1{%
  \def\QCBOptA{#1}%
  \let\QCBOptB\empty
}
\def\Qcb{%
  \@ifnextchar[{%
    \QCBOpt}{\QCBNOpt}%
}
\def\PrepCapArgs{%
  \ifx\QCBOptA\empty
    \ifx\QCTOptA\empty
      {}%
    \else
      \ifx\QCTOptB\empty
        {\QCTOptA}%
      \else
        [\QCTOptB]{\QCTOptA}%
      \fi
    \fi
  \else
    \ifx\QCBOptA\empty
      {}%
    \else
      \ifx\QCBOptB\empty
        {\QCBOptA}%
      \else
        [\QCBOptB]{\QCBOptA}%
      \fi
    \fi
  \fi
}
\def\GRAPHICSPS#1{%
 \ifcase\GRAPHICSTYPE
   \special{ps: #1}%
 \or
   \special{language "PS", include "#1"}%
 \fi
}%
\def\graffile#1#2#3#4{%
    \bgroup
       \@inlabelfalse
       \leavevmode
       \@ifundefined{bbl@deactivate}{\def~{\string~}}{\activesoff}%
        \raise -#4 \BOXTHEFRAME{%
           \hbox to #2{\raise #3\hbox to #2{\null #1\hfil}}}%
    \egroup
}%
\def\draftbox#1#2#3#4{%
 \leavevmode\raise -#4 \hbox{%
  \frame{\rlap{\protect\tiny #1}\hbox to #2%
   {\vrule height#3 width\z@ depth\z@\hfil}%
  }%
 }%
}%
\let\nographics=\@msidraft
\newif\ifwasdraft
\def\GRAPHIC#1#2#3#4#5{%
   \ifnum\@msidraft=\@ne\draftbox{#2}{#3}{#4}{#5}%
   \else\graffile{#1}{#3}{#4}{#5}%
   \fi
}
\def\addtoLaTeXparams#1{%
    \edef\LaTeXparams{\LaTeXparams #1}}%
\newif\ifBoxFrame \BoxFramefalse
\newif\ifOverFrame \OverFramefalse
\newif\ifUnderFrame \UnderFramefalse
\def\BOXTHEFRAME#1{%
   \hbox{%
      \ifBoxFrame
         \frame{#1}%
      \else
         {#1}%
      \fi
   }%
}
\def\doFRAMEparams#1{\BoxFramefalse\OverFramefalse\UnderFramefalse\readFRAMEparams#1\end}%
\def\readFRAMEparams#1{%
 \ifx#1\end%
  \let\next=\relax
  \else
  \ifx#1i\dispkind=\z@\fi
  \ifx#1d\dispkind=\@ne\fi
  \ifx#1f\dispkind=\tw@\fi
  \ifx#1t\addtoLaTeXparams{t}\fi
  \ifx#1b\addtoLaTeXparams{b}\fi
  \ifx#1p\addtoLaTeXparams{p}\fi
  \ifx#1h\addtoLaTeXparams{h}\fi
  \ifx#1X\BoxFrametrue\fi
  \ifx#1O\OverFrametrue\fi
  \ifx#1U\UnderFrametrue\fi
  \ifx#1w
    \ifnum\@msidraft=1\wasdrafttrue\else\wasdraftfalse\fi
    \@msidraft=\@ne
  \fi
  \let\next=\readFRAMEparams
  \fi
 \next
 }%
\def\IFRAME#1#2#3#4#5#6{%
      \bgroup
      \let\QCTOptA\empty
      \let\QCTOptB\empty
      \let\QCBOptA\empty
      \let\QCBOptB\empty
      #6%
      \parindent=0pt
      \leftskip=0pt
      \rightskip=0pt
      \setbox0=\hbox{\QCBOptA}%
      \@tempdima=#1\relax
      \ifOverFrame
          \typeout{This is not implemented yet}%
          \show\HELP
      \else
         \ifdim\wd0>\@tempdima
            \advance\@tempdima by \@tempdima
            \ifdim\wd0 >\@tempdima
               \setbox1 =\vbox{%
                  \unskip\hbox to \@tempdima{\hfill\GRAPHIC{#5}{#4}{#1}{#2}{#3}\hfill}%
                  \unskip\hbox to \@tempdima{\parbox[b]{\@tempdima}{\QCBOptA}}%
               }%
               \wd1=\@tempdima
            \else
               \textwidth=\wd0
               \setbox1 =\vbox{%
                 \noindent\hbox to \wd0{\hfill\GRAPHIC{#5}{#4}{#1}{#2}{#3}\hfill}\\%
                 \noindent\hbox{\QCBOptA}%
               }%
               \wd1=\wd0
            \fi
         \else
            \ifdim\wd0>0pt
              \hsize=\@tempdima
              \setbox1=\vbox{%
                \unskip\GRAPHIC{#5}{#4}{#1}{#2}{0pt}%
                \break
                \unskip\hbox to \@tempdima{\hfill \QCBOptA\hfill}%
              }%
              \wd1=\@tempdima
           \else
              \hsize=\@tempdima
              \setbox1=\vbox{%
                \unskip\GRAPHIC{#5}{#4}{#1}{#2}{0pt}%
              }%
              \wd1=\@tempdima
           \fi
         \fi
         \@tempdimb=\ht1
         \advance\@tempdimb by -#2
         \advance\@tempdimb by #3
         \leavevmode
         \raise -\@tempdimb \hbox{\box1}%
      \fi
      \egroup%
}%
\def\DFRAME#1#2#3#4#5{%
  \hfil\break
  \bgroup
     \leftskip\@flushglue
     \rightskip\@flushglue
     \parindent\z@
     \parfillskip\z@skip
     \let\QCTOptA\empty
     \let\QCTOptB\empty
     \let\QCBOptA\empty
     \let\QCBOptB\empty
     \vbox\bgroup
        \ifOverFrame
           #5\QCTOptA\par
        \fi
        \GRAPHIC{#4}{#3}{#1}{#2}{\z@}%
        \ifUnderFrame
           \break#5\QCBOptA
        \fi
     \egroup
   \egroup
   \break
}%
\def\FFRAME#1#2#3#4#5#6#7{%
  \@ifundefined{floatstyle}
    {
     \begin{figure}[#1]%
    }
    {
     \ifx#1h
      \begin{figure}[H]%
     \else
      \begin{figure}[#1]%
     \fi
    }
  \let\QCTOptA\empty
  \let\QCTOptB\empty
  \let\QCBOptA\empty
  \let\QCBOptB\empty
  \ifOverFrame
    #4
    \ifx\QCTOptA\empty
    \else
      \ifx\QCTOptB\empty
        \caption{\QCTOptA}%
      \else
        \caption[\QCTOptB]{\QCTOptA}%
      \fi
    \fi
    \ifUnderFrame\else
      \label{#5}%
    \fi
  \else
    \UnderFrametrue%
  \fi
  \begin{center}\GRAPHIC{#7}{#6}{#2}{#3}{\z@}\end{center}%
  \ifUnderFrame
    #4
    \ifx\QCBOptA\empty
      \caption{}%
    \else
      \ifx\QCBOptB\empty
        \caption{\QCBOptA}%
      \else
        \caption[\QCBOptB]{\QCBOptA}%
      \fi
    \fi
    \label{#5}%
  \fi
  \end{figure}%
 }%
\def\makeactives{
  \catcode`\"=\active
  \catcode`\;=\active
  \catcode`\:=\active
  \catcode`\'=\active
  \catcode`\~=\active
}
   \gdef\activesoff{%
      \def"{\string"}%
      \def;{\string;}%
      \def:{\string:}%
      \def'{\string'}%
      \def~{\string~}%
    }
\def\FRAME#1#2#3#4#5#6#7#8{%
 \bgroup
 \ifnum\@msidraft=\@ne
   \wasdrafttrue
 \else
   \wasdraftfalse%
 \fi
 \def\LaTeXparams{}%
 \dispkind=\z@
 \def\LaTeXparams{}%
 \doFRAMEparams{#1}%
 \ifnum\dispkind=\z@\IFRAME{#2}{#3}{#4}{#7}{#8}{#5}\else
  \ifnum\dispkind=\@ne\DFRAME{#2}{#3}{#7}{#8}{#5}\else
   \ifnum\dispkind=\tw@
    \edef\@tempa{\noexpand\FFRAME{\LaTeXparams}}%
    \@tempa{#2}{#3}{#5}{#6}{#7}{#8}%
    \fi
   \fi
  \fi
  \ifwasdraft\@msidraft=1\else\@msidraft=0\fi{}%
  \egroup
 }%
\def\TEXUX#1{"texux"}
\long\def\QQQ#1#2{%
     \long\expandafter\def\csname#1\endcsname{#2}}%
\long\def\QQA#1#2{}%
\def\QTR#1#2{{\csname#1\endcsname {#2}}}%
\def\EXPAND#1[#2]#3{}%
\def\NOEXPAND#1[#2]#3{}%
\def\LaTeXparent#1{}%
\def\ChildStyles#1{}%
\def\ChildDefaults#1{}%
\def\QTagDef#1#2#3{}%
  \providecommand{\UNICODE}[2][]{\protect\rule{.1in}{.1in}}
  \providecommand{\U}[1]{\protect\rule{.1in}{.1in}}
\def\QQfnmark#1{\footnotemark}
 \def\abstract{%
  \if@twocolumn
   \section*{Abstract (Not appropriate in this style!)}%
   \else \small
   \begin{center}{\bf Abstract\vspace{-.5em}\vspace{\z@}}\end{center}%
   \quotation
   \fi
  }%
   \def\registered{\relax\ifmmode{}\r@gistered
                    \else$\m@th\r@gistered$\fi}%
 \def\r@gistered{^{\ooalign
  {\hfil\raise.07ex\hbox{$\scriptstyle\rm\text{R}$}\hfil\crcr
  \mathhexbox20D}}}}{}%
\newdimen\theight
\def\newfmtname{LaTeX2e}
  \DeclareOldFontCommand{\rm}{\normalfont\rmfamily}{\mathrm}
  \DeclareOldFontCommand{\sf}{\normalfont\sffamily}{\mathsf}
  \DeclareOldFontCommand{\tt}{\normalfont\ttfamily}{\mathtt}
  \DeclareOldFontCommand{\bf}{\normalfont\bfseries}{\mathbf}
  \DeclareOldFontCommand{\it}{\normalfont\itshape}{\mathit}
  \DeclareOldFontCommand{\sl}{\normalfont\slshape}{\@nomath\sl}
  \DeclareOldFontCommand{\sc}{\normalfont\scshape}{\@nomath\sc}
\def\alpha{{\Greekmath 010B}}%
\def\beta{{\Greekmath 010C}}%
\def\gamma{{\Greekmath 010D}}%
\def\delta{{\Greekmath 010E}}%
\def\epsilon{{\Greekmath 010F}}%
\def\zeta{{\Greekmath 0110}}%
\def\eta{{\Greekmath 0111}}%
\def\theta{{\Greekmath 0112}}%
\def\iota{{\Greekmath 0113}}%
\def\kappa{{\Greekmath 0114}}%
\def\lambda{{\Greekmath 0115}}%
\def\mu{{\Greekmath 0116}}%
\def\nu{{\Greekmath 0117}}%
\def\xi{{\Greekmath 0118}}%
\def\pi{{\Greekmath 0119}}%
\def\rho{{\Greekmath 011A}}%
\def\sigma{{\Greekmath 011B}}%
\def\tau{{\Greekmath 011C}}%
\def\upsilon{{\Greekmath 011D}}%
\def\phi{{\Greekmath 011E}}%
\def\chi{{\Greekmath 011F}}%
\def\psi{{\Greekmath 0120}}%
\def\omega{{\Greekmath 0121}}%
\def\varepsilon{{\Greekmath 0122}}%
\def\vartheta{{\Greekmath 0123}}%
\def\varpi{{\Greekmath 0124}}%
\def\varrho{{\Greekmath 0125}}%
\def\varsigma{{\Greekmath 0126}}%
\def\varphi{{\Greekmath 0127}}%
\def\nabla{{\Greekmath 0272}}
\def\FindBoldGroup{%
   {\setbox0=\hbox{$\mathbf{x\global\edef\theboldgroup{\the\mathgroup}}$}}%
}
\def\Greekmath#1#2#3#4{%
    \if@compatibility
        \ifnum\mathgroup=\symbold
           \mathchoice{\mbox{\boldmath$\displaystyle\mathchar"#1#2#3#4$}}%
                      {\mbox{\boldmath$\textstyle\mathchar"#1#2#3#4$}}%
                      {\mbox{\boldmath$\scriptstyle\mathchar"#1#2#3#4$}}%
                      {\mbox{\boldmath$\scriptscriptstyle\mathchar"#1#2#3#4$}}%
        \else
           \mathchar"#1#2#3#4%
        \fi
    \else
        \FindBoldGroup
        \ifnum\mathgroup=\theboldgroup 
           \mathchoice{\mbox{\boldmath$\displaystyle\mathchar"#1#2#3#4$}}%
                      {\mbox{\boldmath$\textstyle\mathchar"#1#2#3#4$}}%
                      {\mbox{\boldmath$\scriptstyle\mathchar"#1#2#3#4$}}%
                      {\mbox{\boldmath$\scriptscriptstyle\mathchar"#1#2#3#4$}}%
        \else
           \mathchar"#1#2#3#4%
        \fi
      \fi}
\newif\ifGreekBold  \GreekBoldfalse
\let\SAVEPBF=\pbf
\def\pbf{\GreekBoldtrue\SAVEPBF}%
  \newcounter{equationnumber}
  \def\mathletters{%
     \addtocounter{equation}{1}
     \edef\@currentlabel{\theequation}%
     \setcounter{equationnumber}{\c@equation}
     \setcounter{equation}{0}%
     \edef\theequation{\@currentlabel\noexpand\alph{equation}}%
  }
    \def\BibTeX{{\rm B\kern-.05em{\sc i\kern-.025em b}\kern-.08em
                 T\kern-.1667em\lower.7ex\hbox{E}\kern-.125emX}}}{}%
\def\AmS{{\protect\usefont{OMS}{cmsy}{m}{n}%
                A\kern-.1667em\lower.5ex\hbox{M}\kern-.125emS}}}{}%
\def\@@eqncr{\let\@tempa\relax
    \ifcase\@eqcnt \def\@tempa{& & &}\or \def\@tempa{& &}%
      \else \def\@tempa{&}\fi
     \@tempa
     \if@eqnsw
        \iftag@
           \@taggnum
        \else
           \@eqnnum\stepcounter{equation}%
        \fi
     \fi
     \global\tag@false
     \global\@eqnswtrue
     \global\@eqcnt\z@\cr}
\def\TCItag{\@ifnextchar*{\@TCItagstar}{\@TCItag}}
\def\@TCItag#1{%
    \global\tag@true
    \global\def\@taggnum{(#1)}}
\def\@TCItagstar*#1{%
    \global\tag@true
    \global\def\@taggnum{#1}}
\def\ExitTCILatex{\makeatother }
\let\DOTSI\relax
\def\RIfM@{\relax\ifmmode}%
\def\FN@{\futurelet\next}%
\def\iint{\DOTSI\intno@\tw@\FN@\ints@}%
\def\iiint{\DOTSI\intno@\thr@@\FN@\ints@}%
\def\iiiint{\DOTSI\intno@4 \FN@\ints@}%
\def\idotsint{\DOTSI\intno@\z@\FN@\ints@}%
\def\ints@{\findlimits@\ints@@}%
\newif\iflimtoken@
\newif\iflimits@
\def\findlimits@{\limtoken@true\ifx\next\limits\limits@true
 \else\ifx\next\nolimits\limits@false\else
 \limtoken@false\ifx\ilimits@\nolimits\limits@false\else
 \ifinner\limits@false\else\limits@true\fi\fi\fi\fi}%
\def\multint@{\int\ifnum\intno@=\z@\intdots@                          
 \else\intkern@\fi                                                    
 \ifnum\intno@>\tw@\int\intkern@\fi                                   
 \ifnum\intno@>\thr@@\int\intkern@\fi                                 
 \int}
\def\multintlimits@{\intop\ifnum\intno@=\z@\intdots@\else\intkern@\fi
 \ifnum\intno@>\tw@\intop\intkern@\fi
 \ifnum\intno@>\thr@@\intop\intkern@\fi\intop}%
\def\intic@{%
    \mathchoice{\hskip.5em}{\hskip.4em}{\hskip.4em}{\hskip.4em}}%
\def\negintic@{\mathchoice
 {\hskip-.5em}{\hskip-.4em}{\hskip-.4em}{\hskip-.4em}}%
\def\ints@@{\iflimtoken@                                              
 \def\ints@@@{\iflimits@\negintic@
   \mathop{\intic@\multintlimits@}\limits                             
  \else\multint@\nolimits\fi                                          
  \eat@}
 \else                                                                
 \def\ints@@@{\iflimits@\negintic@
  \mathop{\intic@\multintlimits@}\limits\else
  \multint@\nolimits\fi}\fi\ints@@@}%
\def\intkern@{\mathchoice{\!\!\!}{\!\!}{\!\!}{\!\!}}%
\def\plaincdots@{\mathinner{\cdotp\cdotp\cdotp}}%
\def\intdots@{\mathchoice{\plaincdots@}%
 {{\cdotp}\mkern1.5mu{\cdotp}\mkern1.5mu{\cdotp}}%
 {{\cdotp}\mkern1mu{\cdotp}\mkern1mu{\cdotp}}%
 {{\cdotp}\mkern1mu{\cdotp}\mkern1mu{\cdotp}}}%
\def\RIfM@{\relax\protect\ifmmode}
\def\text{\RIfM@\expandafter\text@\else\expandafter\mbox\fi}
\let\nfss@text\text
\def\text@#1{\mathchoice
   {\textdef@\displaystyle\f@size{#1}}%
   {\textdef@\textstyle\tf@size{\firstchoice@false #1}}%
   {\textdef@\textstyle\sf@size{\firstchoice@false #1}}%
   {\textdef@\textstyle \ssf@size{\firstchoice@false #1}}%
   \glb@settings}
\def\textdef@#1#2#3{\hbox{{%
                    \everymath{#1}%
                    \let\f@size#2\selectfont
                    #3}}}
\newif\iffirstchoice@
\def\Let@{\relax\iffalse{\fi\let\\=\cr\iffalse}\fi}%
\def\vspace@{\def\vspace##1{\crcr\noalign{\vskip##1\relax}}}%
\def\multilimits@{\bgroup\vspace@\Let@
 \baselineskip\fontdimen10 \scriptfont\tw@
 \advance\baselineskip\fontdimen12 \scriptfont\tw@
 \lineskip\thr@@\fontdimen8 \scriptfont\thr@@
 \lineskiplimit\lineskip
 \vbox\bgroup\ialign\bgroup\hfil$\m@th\scriptstyle{##}$\hfil\crcr}%
\def\Sb{_\multilimits@}%
\def\endSb{\crcr\egroup\egroup\egroup}%
\def\Sp{^\multilimits@}%
\newdimen\ex@
\def\rightarrowfill@#1{$#1\m@th\mathord-\mkern-6mu\cleaders
 \hbox{$#1\mkern-2mu\mathord-\mkern-2mu$}\hfill
 \mkern-6mu\mathord\rightarrow$}%
\def\leftarrowfill@#1{$#1\m@th\mathord\leftarrow\mkern-6mu\cleaders
 \hbox{$#1\mkern-2mu\mathord-\mkern-2mu$}\hfill\mkern-6mu\mathord-$}%
\def\leftrightarrowfill@#1{$#1\m@th\mathord\leftarrow
\mkern-6mu\cleaders
 \hbox{$#1\mkern-2mu\mathord-\mkern-2mu$}\hfill
 \mkern-6mu\mathord\rightarrow$}%
\def\overrightarrow{\mathpalette\overrightarrow@}%
\def\overrightarrow@#1#2{\vbox{\ialign{##\crcr\rightarrowfill@#1\crcr
 \noalign{\kern-\ex@\nointerlineskip}$\m@th\hfil#1#2\hfil$\crcr}}}%
\def\overleftarrow{\mathpalette\overleftarrow@}%
\def\overleftarrow@#1#2{\vbox{\ialign{##\crcr\leftarrowfill@#1\crcr
 \noalign{\kern-\ex@\nointerlineskip}$\m@th\hfil#1#2\hfil$\crcr}}}%
\def\overleftrightarrow{\mathpalette\overleftrightarrow@}%
\def\overleftrightarrow@#1#2{\vbox{\ialign{##\crcr
   \leftrightarrowfill@#1\crcr
 \noalign{\kern-\ex@\nointerlineskip}$\m@th\hfil#1#2\hfil$\crcr}}}%
\def\underrightarrow{\mathpalette\underrightarrow@}%
\def\underrightarrow@#1#2{\vtop{\ialign{##\crcr$\m@th\hfil#1#2\hfil
  $\crcr\noalign{\nointerlineskip}\rightarrowfill@#1\crcr}}}%
\def\underleftarrow{\mathpalette\underleftarrow@}%
\def\underleftarrow@#1#2{\vtop{\ialign{##\crcr$\m@th\hfil#1#2\hfil
  $\crcr\noalign{\nointerlineskip}\leftarrowfill@#1\crcr}}}%
\def\underleftrightarrow{\mathpalette\underleftrightarrow@}%
\def\underleftrightarrow@#1#2{\vtop{\ialign{##\crcr$\m@th
  \hfil#1#2\hfil$\crcr
 \noalign{\nointerlineskip}\leftrightarrowfill@#1\crcr}}}%
\def\qopnamewl@#1{\mathop{\operator@font#1}\nlimits@}
\let\nlimits@\displaylimits
\def\setboxz@h{\setbox\z@\hbox}
\def\varlim@#1#2{\mathop{\vtop{\ialign{##\crcr
 \hfil$#1\m@th\operator@font lim$\hfil\crcr
 \noalign{\nointerlineskip}#2#1\crcr
 \noalign{\nointerlineskip\kern-\ex@}\crcr}}}}
 \def\rightarrowfill@#1{\m@th\setboxz@h{$#1-$}\ht\z@\z@
  $#1\copy\z@\mkern-6mu\cleaders
  \hbox{$#1\mkern-2mu\box\z@\mkern-2mu$}\hfill
  \mkern-6mu\mathord\rightarrow$}
\def\leftarrowfill@#1{\m@th\setboxz@h{$#1-$}\ht\z@\z@
  $#1\mathord\leftarrow\mkern-6mu\cleaders
  \hbox{$#1\mkern-2mu\copy\z@\mkern-2mu$}\hfill
  \mkern-6mu\box\z@$}
\def\projlim{\qopnamewl@{proj\,lim}}
\def\injlim{\qopnamewl@{inj\,lim}}
\def\varinjlim{\mathpalette\varlim@\rightarrowfill@}
\def\varprojlim{\mathpalette\varlim@\leftarrowfill@}
\def\varliminf{\mathpalette\varliminf@{}}
\def\varliminf@#1{\mathop{\underline{\vrule\@depth.2\ex@\@width\z@
   \hbox{$#1\m@th\operator@font lim$}}}}
\def\varlimsup{\mathpalette\varlimsup@{}}
\def\varlimsup@#1{\mathop{\overline
  {\hbox{$#1\m@th\operator@font lim$}}}}
\def\align{\@verbatim \frenchspacing\@vobeyspaces \@alignverbatim
You are using the "align" environment in a style in which it is not defined.}
\let\csname endalign*\endcsname =\endtrivlist
\def\alignat{\@verbatim \frenchspacing\@vobeyspaces \@alignatverbatim
You are using the "alignat" environment in a style in which it is not defined.}
\let\csname endalignat*\endcsname =\endtrivlist
\def\xalignat{\@verbatim \frenchspacing\@vobeyspaces \@xalignatverbatim
You are using the "xalignat" environment in a style in which it is not defined.}
\let\csname endxalignat*\endcsname =\endtrivlist
\def\gather{\@verbatim \frenchspacing\@vobeyspaces \@gatherverbatim
You are using the "gather" environment in a style in which it is not defined.}
\let\csname endgather*\endcsname =\endtrivlist
\def\multiline{\@verbatim \frenchspacing\@vobeyspaces \@multilineverbatim
You are using the "multiline" environment in a style in which it is not defined.}
\let\csname endmultiline*\endcsname =\endtrivlist
\def\arrax{\@verbatim \frenchspacing\@vobeyspaces \@arraxverbatim
You are using a type of "array" construct that is only allowed in AmS-LaTeX.}
\def\tabulax{\@verbatim \frenchspacing\@vobeyspaces \@tabulaxverbatim
You are using a type of "tabular" construct that is only allowed in AmS-LaTeX.}
\let\csname endarrax*\endcsname =\endtrivlist
\let\csname endtabulax*\endcsname =\endtrivlist
 \def\endequation{%
     \ifmmode\ifinner 
      \iftag@
        \addtocounter{equation}{-1} 
        $\hfil
           \displaywidth\linewidth\@taggnum\egroup \endtrivlist
        \global\tag@false
        \global\@ignoretrue
      \else
        $\hfil
           \displaywidth\linewidth\@eqnnum\egroup \endtrivlist
        \global\tag@false
        \global\@ignoretrue
      \fi
     \else
      \iftag@
        \addtocounter{equation}{-1} 
        \eqno \hbox{\@taggnum}
        \global\tag@false%
        $$\global\@ignoretrue
      \else
        \eqno \hbox{\@eqnnum}
        $$\global\@ignoretrue
      \fi
     \fi\fi
 }
 \newif\iftag@ \tag@false
 \def\TCItag{\@ifnextchar*{\@TCItagstar}{\@TCItag}}
 \def\@TCItag#1{%
     \global\tag@true
     \global\def\@taggnum{(#1)}}
 \def\@TCItagstar*#1{%
     \global\tag@true
     \global\def\@taggnum{#1}}
     \def\tag{\@ifnextchar*{\@tagstar}{\@tag}}
     \def\@tag#1{%
         \global\tag@true
         \global\def\@taggnum{(#1)}}
     \def\@tagstar*#1{%
         \global\tag@true
         \global\def\@taggnum{#1}}
\begin{document}
\title{Examples of renormalized SDEs}

%
%
\author{Y. Bruned, I. Chevyrev, and P. K. Friz}

\date{\today}


\maketitle

\begin{abstract}
We demonstrate two examples of stochastic processes whose lifts to geometric rough paths require a renormalisation procedure to obtain convergence in rough path topologies. Our first example involves a physical Brownian motion subject to a magnetic force which dominates over the friction forces in the small mass limit. Our second example involves a lead-lag process of discretised fractional Brownian motion with Hurst parameter $H \in (1/4,1/2)$, in which the stochastic area captures the quadratic variation of the process. In both examples, a renormalisation of the second iterated integral is needed to ensure convergence of the processes, and we comment on how this procedure mimics negative renormalisation arising in the study of singular SPDEs and regularity structures.
\end{abstract}


\section{Introduction}

In recent years, the theory of regularity structures~\cite{Hairer14} has been proposed to give meaning to a wide class of singular SPDEs. A central feature of the theory is the notion of renormalisation, specifically ``negative renormalisation''~\cite{BHZ16}, which is required to obtain convergence of random models to a meaningful limit. It is well-known that this procedure is inherit to the problem since naive approximations of such equations typically fail to converge (with a number of notable exceptions, including a special variant of gKPZ~\cite{Hairer16}). The same feature thus naturally appears in other solution theories which have been proposed to solve such equations, including the theories of paracontrolled distributions~\cite{GIP15} and the Wilsonian renormalisation group~\cite{Kupiainen16}.

Viewing regularity structures as a multidimensional generalisation of the theory of rough paths~\cite{Lyons98}, it is natural to ask how renormalisation manifests itself in the latter. As solution theories to singular S(P)DEs, both share the common feature that one must give meaning to often analytically ill-posed higher order terms (iterated integrals) of distributions, which is typically done through some stochastic means. However, a key difference in applications of rough paths to SDEs and rough SPDEs~\cite{FrizVictoir10} is that one can usually give meaning to such terms as limits of iterated integrals of mollifications of the irregular noise without the need of renormalisation.

The purpose of this note is to demonstrate situations in rough paths theory which fall outside this usual setting and for which renormalisation is a necessary feature. Specifically, we construct two stochastic processes whose lifts to geometric rough paths fail to converge without a renormalisation procedure akin to the one encountered in the theory of regularity structures.

The first is a physical Brownian motion subject to a magnetic force which dominates over the friction forces in the small mass limit. This example builds on the work~\cite{FrizGassiat15}, where a similar situation was considering with a constant magnetic field.

The second is a lead-lag process of a discretized path, which we take to be fractional Brownian motion with Hurst parameter $H \in (1/4,1/2]$. The stochastic area of this lead-lag process captures the quadratic variation of the discretized path, and thus, as one can expect, the second iterated integral fails to converge as the mesh of the discretisation goes to zero (unless $H=1/2$). This example is motivated from a similar Hoff process considered for semi-martinagles in~\cite{Flint16}.

In both examples we demonstrate an explicit renormalisation procedure of the second iterated integral under which the processes converge in rough path topologies. These (diverging) counter-terms serve precisely the same re-centring role encountered in regularity structures (for a direct comparison, consider the renormalisation of PAM~\cite{Hairer14,GIP15} where only one diverging term needs to be considered). In turn, rough differential equations driven by the renormalised and unrenormalised rough paths are related to one another by the addition of diverging terms, which again mimics the situation encountered in singular SPDEs. We refer to the upcoming work~\cite{Bruned16} for a much more detailed study of this relation.

\medskip

{\bf Acknowledgements.} P.K.F. is partially supported by the European Research Council through CoG-683166 and DFG research unit FOR2402. I.C., affiliated to TU Berlin when this project was commenced, was supported by DFG research unit FOR2402.

\section{Magnetic field blow-up}\label{subsec:magnetic}

Consider a physical Brownian motion in a magnetic field with dynamics given by
\[
m \ddot{x} = -A\dot{x} + B\dot{x} + \xi, \; \; x(t) \in \R^d, 
\]
where $A$ is a symmetric matrix with strictly positive spectrum (representing friction), $B$ is an anti-symmetric matrix (representing the Lorentz force due to a magnetic field), and $\xi$ is an $\R^d$-valued white noise in time. We shall consider the situation that $A$ is constant whereas $B$ is a function of the mass $m$.

We rewrite these dynamics as
\begin{align*}
dX_t &= \frac{1}{m} P_t dt, \; \; X_0 = 0, \\
dP_t &= -\frac{1}{m} M P_t dt + dW_t, \; \; P_0 = 0,
\end{align*}
where $M = A - B$, and we have chosen the starting point as zero simply for convenience. We furthermore introduce the parameter $\varepsilon^2 = m$ and write $X^\varepsilon_t, P^\varepsilon_t$, and $M^\varepsilon = A-B^\varepsilon$ to denote the dependence on $\varepsilon$.

We are interested in the convergence of the processes $P^\varepsilon$ and $M^\varepsilon X^\varepsilon$ in rough path topologies. Let $G^2(\R^d)$ and $\g^2(\R^d)$ denote the step-$2$ free nilpotent Lie group and Lie algebra respectively. Let us also write $\g^2(\R^d) = \R^d \oplus \g^{(2)}(\R^d)$ for the decomposition of $\g^2(\R^d)$ into the first and second levels, where we identify $\g^{(2)}(\R^d)$ with the space of anti-symmetric $d\times d$ matrices. 

For every $\varepsilon > 0$, define the matrix
\[
C^\varepsilon = \int_0^\infty e^{-M^\varepsilon s}e^{-(M^{\varepsilon})^* s}ds,
\]
and the element
\[
v^\varepsilon = -\frac{1}{2}(M^\varepsilon C^\varepsilon - C^\varepsilon(M^\varepsilon)^*) \in \g^{(2)}(\R^d).
\]


For any $v \in \g^{(2)}(\R^d)$, $p \in [1,3)$, and $p$-rough path $(Z_{s,t},\Z_{s,t}) \in G^2(\R^d)$ (where we ignore zeroth component $1$), we define the translated rough path $T_v(Z_{s,t}, \Z_{s,t})$ by
\begin{equation}\label{eq:TvDef}
T_v(Z_{s,t}, \Z_{s,t}) = (Z_{s,t}, \Z_{s,t} + (t-s)v^\varepsilon).
\end{equation}


Consider the $G^2(\R^d)$-valued processes
\begin{align*}
(P^\varepsilon_{s,t}, \Pbb^\varepsilon_{s,t}) &= \left(P^\varepsilon_{s,t}, \int_s^t P^\varepsilon_{s,r} \otimes \circ dP^\varepsilon_r\right), \\
(Z^\varepsilon_{s,t}, \Z^\varepsilon_{s,t}) &= \left(M^\varepsilon X^\varepsilon_{s,t}, \int_s^t M^\varepsilon X_{s,r} \otimes d(M^\varepsilon X^\varepsilon)_r \right),
\end{align*}
and the canonical lift of the Brownian motion $W$
\[
(W_{s,t}, \W_{s,t}) = \left(W_{s,t}, \int_s^t W_{s,r} \otimes \circ dW_r\right),
\]
where the integrals in the definition of $\Pbb^\varepsilon_{s,t}$ and $\W_{s,t}$ are in the Stratonovich sense.

The following proposition establishes the convergence of the ``renormalised'' paths $T_{v^\varepsilon}(P^\varepsilon_{s,t}, \Pbb^\varepsilon_{s,t})$ and $T_{v^\varepsilon}(Z^\varepsilon_{s,t}, \Z^\varepsilon_{s,t})$.

\begin{theorem}\label{thm:magneticConv}
Suppose that
\begin{equation}\label{eq:MBound}
\lim_{\varepsilon \rightarrow 0} |M^\varepsilon|\varepsilon^\kappa = 0 \; \textnormal{ for some } \kappa \in [0,1].
\end{equation}
Then for any $\alpha \in [0,1/2-\kappa/4)$ and $q < \infty$, it holds that $T_{v^\varepsilon}(P^\varepsilon, \Pbb^\varepsilon) \rightarrow (0,0)$ and $T_{v^\varepsilon}(Z^\varepsilon, \Z^\varepsilon) \rightarrow (W,\W)$ in $L^q$ and $\alpha$-H{\"o}lder topology as $\varepsilon \rightarrow 0$. More precisely, as $\varepsilon \rightarrow 0$, in $L^q$
\[
\sup_{s,t \in [0,T]} \frac{|P^\varepsilon_{s,t}|}{|t-s|^\alpha} + \sup_{s,t \in [0,T]} \frac{|\Pbb^\varepsilon_{s,t} + (t-s)v^\varepsilon|}{|t-s|^{2\alpha}} \rightarrow 0.
\]
and
\[
\sup_{s,t \in [0,T]} \frac{|Z^\varepsilon_{s,t} - W_{s,t}|}{|t-s|^\alpha} + \sup_{s,t \in [0,T]} \frac{|\Z^\varepsilon_{s,t} + (t-s)v^\varepsilon - \W_{s,t}|}{|t-s|^{2\alpha}} \rightarrow 0.
\]
\end{theorem}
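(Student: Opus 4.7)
The plan is to exploit the explicit Gaussian structure of $P^\varepsilon$ as an Ornstein--Uhlenbeck process: integrating the SDE yields $P^\varepsilon_t = \int_0^t e^{-M^\varepsilon(t-r)/\varepsilon^2}\,dW_r$, together with the linear identity $M^\varepsilon X^\varepsilon_t = W_t - P^\varepsilon_t$ obtained by directly integrating $dP^\varepsilon_t = -M^\varepsilon dX^\varepsilon_t + dW_t$. This second identity reduces first-level convergence $Z^\varepsilon \to W$ to the estimate $P^\varepsilon \to 0$, and allows us to expand $\Z^\varepsilon$ linearly in terms of $\Pbb^\varepsilon$, $\W$, and two cross-integrals. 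Since every iterated integral in sight is of Gaussian type, moment estimates at a single exponent propagate to all $q < \infty$ by hypercontractivity on finite Wiener chaos, and $\alpha$-H{\"o}lder bounds then follow from Kolmogorov's criterion.

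First I would bound the increments of $P^\varepsilon$. Using the OU representation together with $|M^\varepsilon|\varepsilon^\kappa \to 0$, the stationary covariance is of order $\varepsilon^2 |C^\varepsilon| \lesssim \varepsilon^{2-\kappa}$ (controlling $|C^\varepsilon|$ through the Lyapunov equation $M^\varepsilon C^\varepsilon + C^\varepsilon (M^\varepsilon)^* = I$), while Brownian scaling gives $\mathbb{E}|P^\varepsilon_{s,t}|^2 \lesssim |t-s|$ directly. Interpolating the two bounds yields $\mathbb{E}|P^\varepsilon_{s,t}|^2 \lesssim \varepsilon^{(2-\kappa)\eta}|t-s|^{1-\eta}$ for any $\eta \in [0,1]$, which after Kolmogorov produces $P^\varepsilon \to 0$ in $\alpha$-H{\"o}lder topology for every $\alpha < 1/2-\kappa/4$.

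The heart of the proof is the second level. Decompose
\[
\Pbb^\varepsilon_{s,t} + (t-s)v^\varepsilon = \bigl(\Pbb^\varepsilon_{s,t} - \mathbb{E}\Pbb^\varepsilon_{s,t}\bigr) + \bigl(\mathbb{E}\Pbb^\varepsilon_{s,t} + (t-s)v^\varepsilon\bigr).
\]
The role of $v^\varepsilon$ is to cancel the linear-in-time expected drift of the Stratonovich area of a stationary OU process. A direct computation from the SDE, using $\mathbb{E}[P^\varepsilon_r \otimes P^\varepsilon_r] = \varepsilon^2 C^\varepsilon + O(e^{-c r/\varepsilon^2})$, shows that $\mathbb{E}[\Pbb^\varepsilon_{s,t}]$ equals $-(t-s)v^\varepsilon$ up to a transient boundary term of order $\varepsilon^{2-\kappa}$ coming from the non-stationary initial condition $P^\varepsilon_0 = 0$; this verifies that the correct choice of counter-term is exactly the antisymmetric part of $M^\varepsilon C^\varepsilon$. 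The centred second-chaos remainder $\Pbb^\varepsilon_{s,t} - \mathbb{E}\Pbb^\varepsilon_{s,t}$ is controlled by a Wick-contraction calculation: its second moment is a double integral of products of OU covariances, which after the same $\varepsilon$-vs-$(t-s)$ interpolation gives $2\alpha$-H{\"o}lder convergence to zero for $\alpha < 1/2-\kappa/4$.

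For the $Z^\varepsilon$ component, the identity $Z^\varepsilon = W - P^\varepsilon$ and integration by parts against the $C^1$ path $Z^\varepsilon$ produce
\[
\Z^\varepsilon_{s,t} = \W_{s,t} + \Pbb^\varepsilon_{s,t} - \int_s^t W_{s,r} \otimes \circ dP^\varepsilon_r - \int_s^t P^\varepsilon_{s,r} \otimes \circ dW_r.
\]
Adding the counter-term $(t-s)v^\varepsilon$ reduces convergence of $T_{v^\varepsilon}(Z^\varepsilon,\Z^\varepsilon)$ to $(W,\W)$ to convergence of $T_{v^\varepsilon}(P^\varepsilon,\Pbb^\varepsilon)$ to $(0,0)$ proved in the previous step, plus vanishing of the two mixed Stratonovich integrals; the latter is handled by integration by parts against $W_{s,t}\otimes P^\varepsilon_{s,t}$ together with direct second-moment estimates using the OU kernel and It{\^o} isometry. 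The main obstacle throughout is that the admissible exponent $\alpha < 1/2-\kappa/4$ is dictated by the worst-case scenario $|M^\varepsilon| \sim \varepsilon^{-\kappa}$: one must never lose a factor worse than $|M^\varepsilon|\varepsilon^\kappa$ in any estimate, which forces working only with the combined quantity $\varepsilon^{2-\kappa}$ (the effective stationary variance scale) rather than separate bounds on $\varepsilon$ and $|M^\varepsilon|$, and explains the appearance of $\kappa/4$ via interpolation between the two timescales.
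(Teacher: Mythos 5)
Your proposal matches the paper's strategy at every essential point: rewrite $P^\varepsilon$ via the Ornstein--Uhlenbeck representation $P^\varepsilon_t=\int_0^t e^{-M^\varepsilon(t-r)/\varepsilon^2}\,dW_r$, observe $Z^\varepsilon=W-P^\varepsilon$, establish $L^2$ estimates on increments that interpolate between the two scales $\varepsilon$ and $|t-s|$, compute second moments of the quadratic (second-chaos) functionals via Wick's formula, identify $v^\varepsilon$ as the cancellation of the linear-in-time drift of the Stratonovich area, and finish with Gaussian hypercontractivity plus a Kolmogorov-type rough-path criterion. The paper proves exactly these estimates in Lemmas~\ref{lem:YBounds}--\ref{lem:ZBounds} (through the rescaled stationary OU process $\tilde Y^\varepsilon$) and then applies Theorem~A.13 of~\cite{FrizVictoir10}.

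The one point of genuine divergence is the algebraic decomposition at the second level of $Z^\varepsilon$. You expand $Z^\varepsilon=W-P^\varepsilon$ bilinearly to obtain $\Z^\varepsilon=\W+\Pbb^\varepsilon-\int W_{s,\cdot}\otimes\circ dP^\varepsilon-\int P^\varepsilon_{s,\cdot}\otimes\circ dW$, whereas the paper's Lemma~\ref{lem:ZBounds} integrates by parts to arrive at $\int Z^\varepsilon_{s,\cdot}\otimes dW-\varepsilon Z^\varepsilon_{s,t}\otimes Y^\varepsilon_t+\int M^\varepsilon Y^\varepsilon\otimes Y^\varepsilon\,dr$, hence never routes through $\Pbb^\varepsilon$. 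Your version is a bit more delicate to close: the two mixed Stratonovich integrals do \emph{not} vanish individually (each carries a non-negligible $\tfrac12(t-s)I$ correction term), and only their sum is small after the integration-by-parts identity $\int W_{s,\cdot}\otimes\circ dP^\varepsilon+\bigl(\int P^\varepsilon_{s,\cdot}\otimes\circ dW\bigr)^{\!*}=W_{s,t}\otimes P^\varepsilon_{s,t}$ cancels the corrections. You gesture at this, but the wording ``vanishing of the two mixed Stratonovich integrals'' should be sharpened to ``vanishing of their sum''; as written it would be false. Two further small imprecisions worth noting: the uniform bound on $C^\varepsilon$ is actually $\sup_\varepsilon|C^\varepsilon|<\infty$ (directly from the exponential bound~\eqref{eq:expBound}), so the level-1 estimate is $\min\{\varepsilon^2,|t-s|\}$ rather than the weaker $\min\{\varepsilon^{2-\kappa},|t-s|\}$ you quote (the constraint $\alpha<1/2-\kappa/4$ comes from the second level, not the first); and the counter-term $v^\varepsilon$ is \emph{minus} the antisymmetric part of $M^\varepsilon C^\varepsilon$. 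None of these affect the validity of the overall argument.
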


The rest of the section is devoted to the proof of Theorem~\ref{thm:magneticConv} which builds on the proof of~\cite{FrizGassiat15} Theorem~1.

We set $Y^\varepsilon = P^\varepsilon/\varepsilon$ and obtain that
\begin{align*}
dY^\varepsilon_t &= -\varepsilon^2 M^\varepsilon Y^\varepsilon_t dt + \varepsilon^{-1}dW_t \\
dX^\varepsilon_t &= \varepsilon^{-1}Y^\varepsilon_t dt.
\end{align*}
For fixed $\varepsilon$, we introduce the Brownian motion $\tilde W^\varepsilon_\cdot = \varepsilon W_{\varepsilon^{-2} \cdot}$ and consider
\begin{align*}
d\tilde Y^\varepsilon_t &= -M^\varepsilon\tilde Y^\varepsilon_t dt + d\tilde W^\varepsilon_t.
\end{align*}
Observe that we have the pathwise equalities
\begin{equation}\label{eq:pathwiseEq}
Y^\varepsilon_\cdot = \tilde Y^\varepsilon_{\varepsilon^{-2}\cdot},
\end{equation}
and since $Y^\varepsilon_0 = 0$, we have
\begin{equation}\label{eq:YSol}
\tilde Y^\varepsilon_t = \int_0^t e^{-M^\varepsilon (t-s)} d\tilde W^\varepsilon_s.
\end{equation}


The dependence of $M^\varepsilon$ on $\varepsilon$ is, by construction, only though $B^\varepsilon$, the anti-symmetric part of $M^\varepsilon$. In particular, since the symmetric part $A$ stays constant and has strictly positive spectrum, it follows that for some $\lambda > 0$, $\Real(\sigma(M^\varepsilon)) \subset (\lambda,\infty)$ for all $\varepsilon > 0$. In particular,
\begin{equation}\label{eq:expBound}
\sup_{\tau > 0} \sup_{\varepsilon > 0} \frac{|e^{-\tau M^\varepsilon}|}{e^{-\lambda \tau}} < \infty.
\end{equation}
We see then that
\[
\sup_{\varepsilon > 0} |C^\varepsilon| < \infty
\]
and
\begin{equation}\label{YtildeL2}
\sup_{\varepsilon > 0} \sup_{0 \leq t < \infty} \EEE{|\tilde Y^\varepsilon_t|^2} < \infty.
\end{equation}

\begin{lemma}\label{lem:YBounds}
There exists $C_1 > 0$ such that for all $\varepsilon \in (0,1]$ and $s,t \in [0,T]$
\[
\EEE{\left| Y^{\varepsilon}_{s,t} \right|^2}^{1/2} \leq C_1 \min\{\varepsilon^{-1}|t-s|^{1/2}, 1\}
\]
and
\[
\EEE{\left| \int_s^t Y^{\varepsilon}_r \otimes Y^{\varepsilon}_r dr - (t-s)C^\varepsilon \right|^2}^{1/2} \leq C_1 \min\{\varepsilon |t-s|^{1/2}, |t-s|\}.
\]
\end{lemma}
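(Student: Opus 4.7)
Throughout, set $\tau=\varepsilon^{-2}s$ and $h=\varepsilon^{-2}(t-s)$, and pass to the rescaled OU process through the identities
\[
Y^\varepsilon_{s,t}=\tilde Y^\varepsilon_{\tau+h}-\tilde Y^\varepsilon_{\tau},\qquad \int_s^t Y^\varepsilon_r\otimes Y^\varepsilon_r\,dr=\varepsilon^2\int_\tau^{\tau+h}\tilde Y^\varepsilon_u\otimes\tilde Y^\varepsilon_u\,du .
\]
In these variables the claims become $\EEE{|\tilde Y^\varepsilon_{\tau+h}-\tilde Y^\varepsilon_\tau|^2}^{1/2}\lesssim\min\{\sqrt h,1\}$ and $\EEE{|\int_\tau^{\tau+h}(\tilde Y^\varepsilon_u\otimes\tilde Y^\varepsilon_u-C^\varepsilon)\,du|^2}^{1/2}\lesssim\min\{\sqrt h,h\}$, and everything will follow from the uniform exponential bound \eqref{eq:expBound} and the resulting uniform estimate \eqref{YtildeL2}, together with the fact that both $\tilde Y^\varepsilon$ and its covariance $C_u=\EEE{\tilde Y^\varepsilon_u(\tilde Y^\varepsilon_u)^*}=\int_0^u e^{-M^\varepsilon v}e^{-(M^\varepsilon)^* v}dv$ converge exponentially to their stationary values.

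\textbf{First estimate.} I would use the mild representation
\[
\tilde Y^\varepsilon_{\tau+h}-\tilde Y^\varepsilon_{\tau}=(e^{-M^\varepsilon h}-I)\tilde Y^\varepsilon_\tau+\int_{\tau}^{\tau+h}e^{-M^\varepsilon(\tau+h-u)}\,d\tilde W^\varepsilon_u,
\]
in which the two summands are independent. The ``$1$''-half of the $\min$ is immediate from the triangle inequality, \eqref{YtildeL2}, and $|e^{-M^\varepsilon h}|\lesssim 1$. For the stochastic integral, It\^o's isometry combined with \eqref{eq:expBound} gives
\[
\EEE{\Bigl|\int_{\tau}^{\tau+h}e^{-M^\varepsilon(\tau+h-u)}d\tilde W^\varepsilon_u\Bigr|^2}=\int_0^h |e^{-M^\varepsilon v}|_{HS}^2\,dv\lesssim \int_0^h e^{-2\lambda v}\,dv\lesssim \min\{h,1\},
\]
which already matches the $\sqrt h$ (respectively $1$) scale. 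The deterministic summand $(e^{-M^\varepsilon h}-I)\tilde Y^\varepsilon_\tau$ is handled by writing $e^{-M^\varepsilon h}-I=-M^\varepsilon\int_0^h e^{-M^\varepsilon v}\,dv$ and combining the spectral bound with \eqref{YtildeL2}; this is where control on the size of $|M^\varepsilon|$ coming from \eqref{eq:MBound} must be invoked to reach the $\sqrt h$ scale uniformly in $\varepsilon$.

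\textbf{Second estimate.} Write $F_h=\int_\tau^{\tau+h}(\tilde Y^\varepsilon_u\otimes\tilde Y^\varepsilon_u-C^\varepsilon)\,du$, decompose $F_h=(F_h-\EEE{F_h})+\EEE{F_h}$, and estimate each piece. The bias satisfies $\EEE{F_h}=\int_\tau^{\tau+h}(C_u-C^\varepsilon)\,du$ with $|C_u-C^\varepsilon|\leq \int_u^\infty|e^{-M^\varepsilon v}|^2\,dv\lesssim e^{-2\lambda u}$ by \eqref{eq:expBound}, so $|\EEE{F_h}|\lesssim \min\{1,h\}$. The centred part is a second order Wiener chaos in $\tilde Y^\varepsilon$, and Isserlis' (Wick's) formula reduces its second moment to integrals of products $\EEE{\tilde Y^{\varepsilon,i}_u\tilde Y^{\varepsilon,j}_v}$; these are uniformly controlled by $Ce^{-\lambda|u-v|}$ via \eqref{eq:expBound} and the uniform bound on $|C^\varepsilon|$, giving
\[
\Var(F_h^{ij})\lesssim \iint_{[\tau,\tau+h]^2}e^{-2\lambda|u-v|}\,du\,dv\lesssim \min\{h,h^2\}.
\]
Multiplying back by $\varepsilon^2$ yields the announced $\min\{\varepsilon|t-s|^{1/2},|t-s|\}$ bound.

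\textbf{Main obstacle.} The most delicate point is the deterministic summand $(e^{-M^\varepsilon h}-I)\tilde Y^\varepsilon_\tau$ in the short time regime. Because of the possibly large anti-symmetric part of $M^\varepsilon$, the factor $|e^{-M^\varepsilon h}-I|$ is in general only of order one rather than of order $\sqrt h$, so matching the $\sqrt h$ scale uniformly in $\varepsilon$ requires the quantitative control provided by \eqref{eq:MBound}; the rest of the argument is then a bookkeeping exercise in Gaussian second-moment estimates using the uniform spectral gap.
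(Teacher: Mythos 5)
Your bias/variance decomposition plus Wick's formula for the second estimate is essentially the paper's own argument: the paper bounds $|\mu^\varepsilon_{s,t}-(t-s)C^\varepsilon|$ by exponential decay of $C^\varepsilon_r - C^\varepsilon$ and then controls the variance by Wick together with $|\EEE{Y^\varepsilon_u\otimes Y^\varepsilon_r}|\lesssim e^{-\varepsilon^{-2}\lambda|u-r|}$. That part of your proposal is correct.

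For the first estimate there is a genuine gap. Your decomposition into $(e^{-M^\varepsilon h}-I)\tilde Y^\varepsilon_\tau$ plus the independent stochastic integral is correct, and the stochastic integral gives $\lesssim\min\{\sqrt h,1\}$ as you say. But your treatment of the deterministic summand does not close: you write $e^{-M^\varepsilon h}-I=-M^\varepsilon\int_0^h e^{-M^\varepsilon v}\,dv$ and then appeal to \eqref{eq:MBound}. First, \eqref{eq:MBound} is a hypothesis of Theorem~\ref{thm:magneticConv}, not of Lemma~\ref{lem:YBounds}, so it is not available at this point. Second, and more importantly, even under \eqref{eq:MBound} the norm $|M^\varepsilon|$ may diverge (it is compatible with $|M^\varepsilon|\sim\varepsilon^{-\kappa}$ for any $\kappa\in(0,1]$), and your representation only yields $|e^{-M^\varepsilon h}-I|\lesssim\min\{|M^\varepsilon|h,1\}$, which for $h$ of order $|M^\varepsilon|^{-1}$ is of order one rather than of order $\sqrt h$. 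Concretely: take $d=2$, $A=I$, $B^\varepsilon=\varepsilon^{-1/2}J$ with $J=\left(\begin{smallmatrix}0&-1\\1&0\end{smallmatrix}\right)$. Then \eqref{eq:MBound} holds with $\kappa=1$, \eqref{eq:expBound} holds with $\lambda=1$, and $C^\varepsilon=\tfrac12 I$. Taking $s=\varepsilon^2$ (so $\tau=1$) and $t-s=\pi\varepsilon^{5/2}$ (so $h=\pi\varepsilon^{1/2}$) gives $e^{-M^\varepsilon h}=-e^{-h}I$, hence $\EEE{|(e^{-M^\varepsilon h}-I)\tilde Y^\varepsilon_\tau|^2}=(1+e^{-h})^2(1-e^{-2})$, which is bounded away from zero, while the claimed upper bound $C_1^2\min\{h,1\}=C_1^2\pi\varepsilon^{1/2}\to 0$. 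So the route you sketch cannot produce the stated inequality; what you would actually need is $\sup_\varepsilon|M^\varepsilon|<\infty$, which \eqref{eq:MBound} does not supply.

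This gap is not merely in your write-up. The paper dismisses the first inequality as ``clear'' from~\eqref{eq:pathwiseEq}, \eqref{eq:YSol} and~\eqref{YtildeL2} and never confronts the term $(e^{-M^\varepsilon h}-I)\tilde Y^\varepsilon_\tau$ that your (correct) mild decomposition makes visible; indeed \eqref{eq:YSol} alone only controls $\int_\tau^{\tau+h}e^{-M^\varepsilon(\tau+h-u)}d\tilde W^\varepsilon_u$, not the full increment. You were right to single this out as the main obstacle; the error is in asserting that \eqref{eq:MBound} resolves it, rather than observing that the inequality as stated requires a strictly stronger uniform bound on $|M^\varepsilon|$ and flagging the discrepancy with the Lemma's hypotheses.
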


\begin{proof}
The first inequality is clear from~\eqref{eq:pathwiseEq},~\eqref{eq:YSol} and~\eqref{YtildeL2}. For the second, from~\eqref{eq:YSol}, we see that for every $r > 0$, $\tilde Y^{\varepsilon}_r$ has distribution $\NN(0, C^\varepsilon_r)$ where
\[
C^\varepsilon_r = \int_0^r e^{-M^\varepsilon (r-u)} e^{-(M^\varepsilon)^*(r-u)} du = \int_0^r e^{-M^\varepsilon u} e^{-(M^\varepsilon)^* u} du.
\]
Hence $Y^\varepsilon_r = \tilde Y_{\varepsilon^{-2}r}$ has distribution $\NN(0, C^\varepsilon_{\varepsilon^{-2} r})$. Thus
\[
\EEE{\int_s^t Y^{\varepsilon}_r \otimes Y^{\varepsilon}_r dr} = \int_s^t C^\varepsilon_{\varepsilon^{-2} r} dr = \int_s^t \int_0^{\varepsilon^{-2} r} e^{-M^\varepsilon u} e^{-(M^\varepsilon)^* u} du dr =: \mu_{s,t}^\varepsilon.
\]
Observe that from~\eqref{eq:expBound}
\begin{align*}
|\mu_{s,t}^\varepsilon - (t-s) C^\varepsilon| &\leq \int_s^t \int_{\varepsilon^{-2} r}^\infty |e^{-M^\varepsilon u} e^{-(M^\varepsilon)^* u}| du dr \\
&\leq C_2\int_s^t \int_{\varepsilon^{-2}r}^\infty  e^{-2\lambda u} dudr \\
&\leq C_3 \int_s^t e^{-2\lambda \varepsilon^{-2}r}dr \\
&\leq C_4 \min\{\varepsilon^2, |t-s|\} \\
&\leq C_4 \min\{\varepsilon |t-s|^{1/2}, |t-s|\}.
\end{align*}
We now claim that
\[
\EEE{\left| \int_s^t Y^{\varepsilon}_r \otimes Y^{\varepsilon}_r dr - \mu_{s,t}^\varepsilon \right|^2} \leq C_5 \min\{\varepsilon^2 |t-s|, |t-s|^2\},
\]
from which the conclusion follows. Indeed, by Fubini and Wick's formula
\begin{align*}
\EEE{\left(\int_s^t Y^{\varepsilon,i}_r Y^{\varepsilon,j}_r dr\right)^2}
&= \int_{[s,t]^2} \EEE{Y^{\varepsilon,i}_r Y^{\varepsilon,j}_r Y^{\varepsilon,i}_u Y^{\varepsilon,j}_u} dr du \\
&= \int_{[s,t]^2} \EEE{Y^{\varepsilon,i}_r Y^{\varepsilon,j}_r}\EEE{Y^{\varepsilon,i}_u Y^{\varepsilon,j}_u} dr du \\
&+ \int_{[s,t]^2}  \EEE{Y^{\varepsilon,i}_r Y^{\varepsilon,i}_u}\EEE{Y^{\varepsilon,j}_r Y^{\varepsilon,j}_u} dr du \\
&+ \int_{[s,t]^2}  \EEE{Y^{\varepsilon,i}_r Y^{\varepsilon,j}_u}\EEE{Y^{\varepsilon,j}_r Y^{\varepsilon,i}_u} dr du \\
&\leq (\mu_{i,j}^\varepsilon)_{s,t}^2 + 4\int_{[s,t]^2} \left|\EEE{Y^{\varepsilon}_u \otimes Y^{\varepsilon}_r}\right|^2 \mathbf{1}\{r \leq u\} dr du.
\end{align*}

Observe that for $r \leq u$
\[
\EEE{Y^\varepsilon_u \mid Y^\varepsilon_r} = e^{-\varepsilon^{-2} M^\varepsilon(u-r)} Y_r^\varepsilon.
\]
and so
\[
|\EEE{Y^\varepsilon_u\otimes Y^\varepsilon_r}|^2 \mathbf{1}\{r \leq u\}  \leq C_6 e^{-\varepsilon^{-2}2\lambda(u-r)} |C^\varepsilon_{\varepsilon^{-2}r}| \leq C_7 e^{-\varepsilon^{-2}2\lambda(u-r)}.
\]
Thus
\begin{align*}
\EEE{\left(\int_s^t Y^{\varepsilon,i}_r Y^{\varepsilon,j}_r dr - (\mu_{i,j}^\varepsilon)_{s,t}\right)^2}
&= \EEE{\left(\int_s^t Y^{\varepsilon,i}_r Y^{\varepsilon,j}_r dr\right)^2} - (\mu_{i,j}^\varepsilon)_{s,t}^2 \\
&\leq C_8 \int_s^t \int_r^t e^{-\varepsilon^{-2}2\lambda(u-r)} du dr \\
&\leq C_9 \min\{\varepsilon^2 |t-s|, |t-s|^2\}
\end{align*}
as claimed.
\end{proof}

\begin{lemma}\label{lem:PBounds}
There exists $C_{10} > 0$ such that for all $\varepsilon \in (0,1]$ and $s,t \in [0,T]$
\[
\norm{P^\varepsilon_{s,t}}_{L^2} \leq C_{10} \min\{\varepsilon, |t-s|^{1/2}\}
\]
and
\[
\norm{\Pbb^\varepsilon_{s,t} + (t-s)v^\varepsilon}_{L^2} \leq C_{10} |M^\varepsilon| \min\{\varepsilon |t-s|^{1/2}, |t-s|\}
\]
\end{lemma}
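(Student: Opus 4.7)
The first estimate follows immediately from Lemma~\ref{lem:YBounds} and the scaling $P^\varepsilon = \varepsilon Y^\varepsilon$, which gives
\[
\|P^\varepsilon_{s,t}\|_{L^2} = \varepsilon \|Y^\varepsilon_{s,t}\|_{L^2} \leq C_1 \min\{|t-s|^{1/2}, \varepsilon\}.
\]
For the second estimate, my plan is to derive an explicit algebraic decomposition of $\Pbb^\varepsilon_{s,t}$ that isolates the counter-term $(t-s)v^\varepsilon$. First I would convert the Stratonovich integral to Itô (picking up $\tfrac{1}{2}(t-s)I_d$) and substitute the SDE $dP^\varepsilon_r = dW_r - \varepsilon^{-2} M^\varepsilon P^\varepsilon_r\,dr$ to obtain
\[
\Pbb^\varepsilon_{s,t} = \int_s^t P^\varepsilon_{s,r} \otimes dW_r - \varepsilon^{-2}\int_s^t P^\varepsilon_{s,r} \otimes M^\varepsilon P^\varepsilon_r\,dr + \tfrac{1}{2}(t-s)I_d.
\]
Then I would split $P^\varepsilon_{s,r} = P^\varepsilon_r - P^\varepsilon_s$ in the drift and use the identity $\varepsilon^{-2} M^\varepsilon \int_s^t P^\varepsilon_r\,dr = W_{s,t} - P^\varepsilon_{s,t} = M^\varepsilon X^\varepsilon_{s,t}$, obtained by integrating the SDE, to extract a term proportional to $\int_s^t Y^\varepsilon_r \otimes Y^\varepsilon_r\,dr$ and a boundary term $P^\varepsilon_s \otimes M^\varepsilon X^\varepsilon_{s,t}$.

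The crucial algebraic step is the Lyapunov identity $M^\varepsilon C^\varepsilon + C^\varepsilon (M^\varepsilon)^* = I_d$, which I would obtain by integrating $\tfrac{d}{ds}\bigl(e^{-M^\varepsilon s} e^{-(M^\varepsilon)^* s}\bigr)$ from $0$ to $\infty$ and using~\eqref{eq:expBound}. Combined with the definition of $v^\varepsilon$, this gives $C^\varepsilon (M^\varepsilon)^* = \tfrac{1}{2}I_d + v^\varepsilon$. Writing $\int_s^t Y^\varepsilon_r \otimes Y^\varepsilon_r\,dr = (t-s) C^\varepsilon + E^\varepsilon_{s,t}$ with $E^\varepsilon_{s,t}$ controlled by Lemma~\ref{lem:YBounds}, the symmetric pieces $\pm\tfrac{1}{2}(t-s)I_d$ cancel and the counter-term $-(t-s)v^\varepsilon$ emerges, leaving
\[
\Pbb^\varepsilon_{s,t} + (t-s) v^\varepsilon = \int_s^t P^\varepsilon_{s,r} \otimes dW_r - E^\varepsilon_{s,t} (M^\varepsilon)^* + P^\varepsilon_s \otimes M^\varepsilon X^\varepsilon_{s,t}.
\]

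Each of the three terms can then be bounded separately: the Itô integral by Itô's isometry together with the first part of this lemma, yielding an $L^2$ bound of order $\min\{\varepsilon|t-s|^{1/2}, |t-s|\}$; the second term directly by $|M^\varepsilon|$ times the second estimate of Lemma~\ref{lem:YBounds}; and the boundary term by Cauchy--Schwarz, using the uniform bound $\|P^\varepsilon_s\|_{L^2} \lesssim \varepsilon$ from~\eqref{YtildeL2} together with $\|M^\varepsilon X^\varepsilon_{s,t}\|_{L^2} \leq \|W_{s,t}\|_{L^2} + \|P^\varepsilon_{s,t}\|_{L^2} \lesssim |t-s|^{1/2}$. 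Since $|M^\varepsilon|$ is uniformly bounded below by $\lambda > 0$ from the spectral condition, any term without an $|M^\varepsilon|$ prefactor can be absorbed into $C_{10}|M^\varepsilon|$. The hard part will be spotting the Lyapunov cancellation that produces $v^\varepsilon$ exactly; once that identity is in place, the remaining work reduces to collecting the bounds supplied by Lemma~\ref{lem:YBounds} and~\eqref{YtildeL2}.
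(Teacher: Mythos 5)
Your decomposition is, after the substitution $P^\varepsilon = \varepsilon Y^\varepsilon$, exactly the one in the paper: since $Y^\varepsilon_s\otimes\int_s^t M^\varepsilon Y^\varepsilon_r\,dr = P^\varepsilon_s \otimes M^\varepsilon X^\varepsilon_{s,t}$, your identity
\[
\Pbb^\varepsilon_{s,t} + (t-s) v^\varepsilon = \int_s^t P^\varepsilon_{s,r} \otimes dW_r - E^\varepsilon_{s,t} (M^\varepsilon)^* + P^\varepsilon_s \otimes M^\varepsilon X^\varepsilon_{s,t}
\]
is the paper's identity $\Pbb^\varepsilon_{s,t} + (t-s)v^\varepsilon = -\bigl(\int_s^t Y^\varepsilon_{s,r} \otimes Y^\varepsilon_r\,dr - (t-s)C^\varepsilon\bigr)(M^\varepsilon)^* + \varepsilon\int_s^t Y^\varepsilon_{s,r}\otimes dW_r$ with the $\int Y_s\otimes Y_r\,dr$ piece separated out. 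Your derivation of the Lyapunov identity $M^\varepsilon C^\varepsilon + C^\varepsilon(M^\varepsilon)^* = I$ by differentiating $e^{-M^\varepsilon s}e^{-(M^\varepsilon)^*s}$ is correct, and it nicely justifies the relation $v^\varepsilon = C^\varepsilon(M^\varepsilon)^* - \tfrac12 I$ that the paper only asserts can be ``directly verified''.

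There is, however, a genuine gap in your bound on the boundary term. You bound $\|P^\varepsilon_s\otimes M^\varepsilon X^\varepsilon_{s,t}\|_{L^2}$ by Cauchy--Schwarz using $\|P^\varepsilon_s\|_{L^4}\lesssim \varepsilon$ and $\|M^\varepsilon X^\varepsilon_{s,t}\|_{L^4}\lesssim |t-s|^{1/2}$, which yields only $\varepsilon|t-s|^{1/2}$. But the lemma requires $\min\{\varepsilon|t-s|^{1/2},|t-s|\}$, and in the regime $|t-s|<\varepsilon^2$ one has $|t-s|<\varepsilon|t-s|^{1/2}$, so your bound is strictly weaker. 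This is not merely cosmetic: the proof of Theorem~\ref{thm:magneticConv} uses the interpolation $\min\{\varepsilon|t-s|^{1/2},|t-s|\}\leq \varepsilon^{\kappa}|t-s|^{1-\kappa/2}$ to get a uniform-in-$\varepsilon$ H\"older-type moment bound, and the estimate $|M^\varepsilon|\varepsilon|t-s|^{1/2}$ alone does not imply $C|t-s|^{1-\kappa/2}$ uniformly in small $|t-s|$ when $\kappa<1$. To close the gap you need a second bound on the boundary term that wins when $|t-s|$ is small compared with $\varepsilon^2$. The cheapest route within your scheme is to also use $M^\varepsilon X^\varepsilon_{s,t} = \varepsilon^{-2}\int_s^t M^\varepsilon P^\varepsilon_r\,dr$ together with the uniform bound $\|P^\varepsilon_r\|_{L^4}\lesssim\varepsilon$ to get $\|M^\varepsilon X^\varepsilon_{s,t}\|_{L^4}\lesssim \varepsilon^{-1}|M^\varepsilon|\,|t-s|$, whence $\|P^\varepsilon_s\otimes M^\varepsilon X^\varepsilon_{s,t}\|_{L^2}\lesssim |M^\varepsilon|\,|t-s|$; combined with your $\varepsilon|t-s|^{1/2}$ bound and $|M^\varepsilon|\geq\lambda$, this gives the required $|M^\varepsilon|\min\{\varepsilon|t-s|^{1/2},|t-s|\}$. (Equivalently, one may bound $\|\int_s^t Y^\varepsilon_r\,dr\|_{L^2}^2\lesssim |t-s|\min\{\varepsilon^2,|t-s|\}$ directly via the exponential decay $|\EEE{Y^\varepsilon_u\otimes Y^\varepsilon_r}|\lesssim e^{-2\lambda\varepsilon^{-2}|u-r|}$, which is what the paper's appeal to ``Fubini and Wick's formula'' amounts to.)
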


\begin{proof}
The first inequality is immediate from Lemma~\ref{lem:YBounds}. For the second, we have
\begin{align*}
\Pbb^\varepsilon_{s,t} &= \varepsilon^2 \int_s^t Y^\varepsilon_{s,r} \otimes \circ dY^\varepsilon_r \\
&= -\int_s^t Y^\varepsilon_{s,r} \otimes M^\varepsilon Y^\varepsilon_r dr + \varepsilon\int_s^t Y^\varepsilon_{s,r}\otimes dW_r + \frac{1}{2}(t-s)I.
\end{align*}
Since $Y^\varepsilon_{s,r} \otimes M^\varepsilon Y^\varepsilon_r = (Y^\varepsilon_{s,r} \otimes Y^\varepsilon_r)(M^\varepsilon)^*$ and we can directly verify that $v^\varepsilon = C^\varepsilon(M^\varepsilon)^* - \frac{1}{2}I$, we have
\[
\Pbb^\varepsilon_{s,t} + (t-s)v^\varepsilon = -\left(\int_s^t Y^\varepsilon_{s,r} \otimes Y^\varepsilon_r dr - (t-s)C^\varepsilon\right) (M^\varepsilon)^* + \varepsilon \int_s^t Y^\varepsilon_{s,r} \otimes dW_r.
\]
From Lemma~\ref{lem:YBounds}, we see that
\[
\norm{\varepsilon \int_s^t Y^\varepsilon_{s,r} \otimes dW_r}_{L^2} \leq C_{11}\min\{\varepsilon|t-s|^{1/2},|t-s|\}.
\]
Furthermore, by Fubini and Wick's formula, we can readily show
\[
\norm{\int_s^t Y^\varepsilon_s \otimes Y^\varepsilon_r dr}_{L^2} \leq C_{12} \min\{\varepsilon |t-s|^{1/2}, |t-s|\}.
\]
It now follows from Lemma~\ref{lem:YBounds} that
\[
\norm{\Pbb^\varepsilon_{s,t} + (t-s)v^\varepsilon}_{L^2} \leq  C_{13} |M^\varepsilon| \min\{\varepsilon |t-s|^{1/2}, |t-s|\}.
\]
\end{proof}

\begin{lemma}\label{lem:ZBounds}
There exists $C_{14} > 0$ such that for all $\varepsilon \in (0,1]$ and $s,t \in [0,T]$
\[
\EEE{|Z^\varepsilon_{s,t} - W_{s,t}|^2}^{1/2} \leq C_{14} \min\{\varepsilon, |t-s|^{1/2}\}
\]
and
\[
\EEE{\left|\Z^\varepsilon_{s,t} + (t-s)v^\varepsilon - \W_{s,t}\right|^2}^{1/2} \leq C_{14} |M^\varepsilon| \min\{\varepsilon |t-s|^{1/2}, |t-s|\}.
\]
\end{lemma}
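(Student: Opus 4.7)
The first bound is immediate. Integrating $dP^\varepsilon_t = -\varepsilon^{-2}M^\varepsilon P^\varepsilon_t dt + dW_t$ from $0$ to $t$ yields $P^\varepsilon_t + M^\varepsilon X^\varepsilon_t = W_t$, hence $Z^\varepsilon_{s,t} - W_{s,t} = -P^\varepsilon_{s,t}$, and the estimate follows directly from Lemma~\ref{lem:PBounds}.

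For the second bound, the plan is to derive an exact algebraic identity expressing $\Z^\varepsilon_{s,t} + (t-s)v^\varepsilon - \W_{s,t}$ as a sum of four terms, each of which can be bounded by $C|M^\varepsilon|\min\{\varepsilon|t-s|^{1/2},|t-s|\}$ in $L^2$. First I would expand $\Z^\varepsilon_{s,t} = \int_s^t Z^\varepsilon_{s,r}\otimes dZ^\varepsilon_r$ using the semimartingale identity $dZ^\varepsilon = dW - dP^\varepsilon$ to split into It\^o integrals, convert $\W_{s,t} = \int_s^t W_{s,r}\otimes dW_r + \tfrac12(t-s)I$ from Stratonovich to It\^o, and apply $Z^\varepsilon - W = -P^\varepsilon$ to obtain
\[
\Z^\varepsilon_{s,t} - \W_{s,t} = -\int_s^t P^\varepsilon_{s,r}\otimes dW_r - \int_s^t Z^\varepsilon_{s,r}\otimes dP^\varepsilon_r - \tfrac12(t-s)I.
\]
The key step is It\^o integration by parts on the mixed integral: since $Z^\varepsilon$ is absolutely continuous, $[Z^\varepsilon,P^\varepsilon] = 0$ and
\[
\int_s^t Z^\varepsilon_{s,r}\otimes dP^\varepsilon_r = Z^\varepsilon_{s,t}\otimes P^\varepsilon_{s,t} - M^\varepsilon \int_s^t Y^\varepsilon_r\otimes Y^\varepsilon_{s,r}\,dr,
\]
converting the remaining stochastic integral into a Riemann integral in $Y^\varepsilon$. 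Splitting $Y^\varepsilon_{s,r} = Y^\varepsilon_r - Y^\varepsilon_s$, replacing $\int_s^t Y^\varepsilon_r\otimes Y^\varepsilon_r\,dr$ by $(t-s)C^\varepsilon$ via Lemma~\ref{lem:YBounds}, and invoking the Lyapunov identity $M^\varepsilon C^\varepsilon + C^\varepsilon(M^\varepsilon)^* = I$ (obtained by integrating $-\tfrac{d}{ds}(e^{-sM^\varepsilon}e^{-s(M^\varepsilon)^*})$) in the equivalent form $M^\varepsilon C^\varepsilon = \tfrac12 I - v^\varepsilon$, the $-\tfrac12(t-s)I$ correction cancels and the counter-term $(t-s)v^\varepsilon$ surfaces naturally, yielding
\begin{align*}
\Z^\varepsilon_{s,t} + (t-s)v^\varepsilon - \W_{s,t} &= -\int_s^t P^\varepsilon_{s,r}\otimes dW_r - Z^\varepsilon_{s,t}\otimes P^\varepsilon_{s,t}\\
&\quad {}+ M^\varepsilon E_{s,t} - M^\varepsilon\Big(\int_s^t Y^\varepsilon_r\,dr\Big)\otimes Y^\varepsilon_s,
\end{align*}
where $E_{s,t} := \int_s^t Y^\varepsilon_r\otimes Y^\varepsilon_r\,dr - (t-s)C^\varepsilon$.

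It then remains to estimate the four terms in $L^2$. The It\^o integral is handled via the It\^o isometry combined with Lemma~\ref{lem:PBounds}, giving squared $L^2$ norm at most $C\int_s^t\min\{\varepsilon^2,r-s\}\,dr \leq C\min\{\varepsilon^2|t-s|,|t-s|^2\}$. For $Z^\varepsilon_{s,t}\otimes P^\varepsilon_{s,t}$, Cauchy--Schwarz together with Gaussian moment equivalence (both processes are Gaussian) gives $\|Z^\varepsilon_{s,t}\otimes P^\varepsilon_{s,t}\|_{L^2}\leq \|Z^\varepsilon_{s,t}\|_{L^4}\|P^\varepsilon_{s,t}\|_{L^4}\leq C|t-s|^{1/2}\min\{\varepsilon,|t-s|^{1/2}\}$. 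The term $M^\varepsilon E_{s,t}$ is controlled directly by Lemma~\ref{lem:YBounds}, and the last term is handled analogously to the Fubini/Wick computation appearing at the end of the proof of Lemma~\ref{lem:PBounds}. Since $|M^\varepsilon|\geq|A|\geq\lambda$ uniformly (the operator norm of a matrix dominates that of its symmetric part), the first two estimates can be absorbed into the target $C|M^\varepsilon|\min\{\varepsilon|t-s|^{1/2},|t-s|\}$ bound. The main obstacle is finding the right identity in the first place, since a naive use of $dZ^\varepsilon = dW - dP^\varepsilon$ collapses tautologically; the crucial move is the It\^o IBP step that trades the stochastic integral $\int Z^\varepsilon\otimes dP^\varepsilon$ for a Riemann integral in $Y^\varepsilon$, from which the renormalisation $(t-s)v^\varepsilon$ emerges through the Lyapunov equation.
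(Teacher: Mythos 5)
Your argument is correct and essentially the same as the paper's: the paper integrates $\int_s^t Z^\varepsilon_r\otimes dY^\varepsilon_r$ by parts while you do so for $\int_s^t Z^\varepsilon_{s,r}\otimes dP^\varepsilon_r$, and your four-term decomposition differs from the paper's three-term one only by the cancelling identity $Z^\varepsilon_{s,t}\otimes P^\varepsilon_s = M^\varepsilon\bigl(\int_s^t Y^\varepsilon_r\,dr\bigr)\otimes Y^\varepsilon_s$, so the residual terms and their $L^2$ estimates via Lemmas~\ref{lem:YBounds}--\ref{lem:PBounds} coincide. In both arguments the counter-term surfaces from the Lyapunov relation $M^\varepsilon C^\varepsilon + C^\varepsilon(M^\varepsilon)^* = I$ (which the paper records as $v^\varepsilon = -M^\varepsilon C^\varepsilon + \tfrac12 I$), and both implicitly use $|M^\varepsilon|\ge\lambda_{\min}(A)>0$ to absorb the two terms carrying no explicit $|M^\varepsilon|$ prefactor.
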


\begin{proof}
The first inequality follows from $Z^\varepsilon_{s,t} = W_{s,t} - \varepsilon Y^\varepsilon_{s,t}$ and Lemma~\ref{lem:YBounds}. For the second, we have
\begin{align*}
\int_s^t Z^\varepsilon_{s,r} \otimes dZ^\varepsilon_r
&= \int_s^t Z^\varepsilon_{s,r} \otimes dW_r - \varepsilon\left( \int_s^t Z^\varepsilon_{r} \otimes dY^\varepsilon_r - Z^\varepsilon_s\otimes Y^\varepsilon_{s,t}\right) \\
&= \int_s^t Z^\varepsilon_{s,r} \otimes dW_r - \varepsilon \left( Z^\varepsilon_t\otimes Y^\varepsilon_t - \int_s^t dZ_r \otimes Y^\varepsilon_r - Z^\varepsilon_s\otimes Y^\varepsilon_t\right) \\
&= \int_s^t Z^\varepsilon_{s,r} \otimes dW_r - \varepsilon Z_{s,t}^\varepsilon\otimes Y_t^\varepsilon + \int_s^t M^\varepsilon Y^\varepsilon_r \otimes Y^\varepsilon_r dr.
\end{align*}
We see that
\[
\norm{\int_s^t Z^\varepsilon_{s,r} \otimes dW_r - \int_s^t W_{s,r} \otimes dW_r}^2_{L^2} \leq C_{15}\min\{\varepsilon^2|t-s|, |t-s|^2\}.
\]
Furthermore, by Fubini and Wick's formula, we can readily show
\[
\norm{\varepsilon Z^\varepsilon_{s,t}\otimes Y^\varepsilon_t}^2_{L^2} = \norm{\int_s^t M^\varepsilon Y^\varepsilon_{r}\otimes Y^\varepsilon_t dr}^2_{L^2} \leq C_{16}|M^\varepsilon| \min\{\varepsilon^2 |t-s|, |t-s|^2\}.
\]
Finally, by Lemma~\ref{lem:YBounds}
\[
\norm{\int_s^t M^\varepsilon Y^\varepsilon_r \otimes Y^\varepsilon_r dr - (t-s)M^\varepsilon C^\varepsilon|}_{L^2} \leq C_{17}|M^\varepsilon| \min \{\varepsilon |t-s|^{1/2}, |t-s|\}.
\]
It follows that
\[
\norm{\Z^\varepsilon_{s,r} - \W_{s,t} - (t-s)(M^\varepsilon C^\varepsilon - \frac{1}{2}I)}_{L^2} \leq C_{18}|M^\varepsilon| \min \{\varepsilon |t-s|^{1/2}, |t-s|\}.
\]
We can directly verify $v^\varepsilon = -M^\varepsilon C^\varepsilon + \frac{1}{2}I$, from which the conclusion follows.
\end{proof}

\begin{proof}[Proof of Theorem~\ref{thm:magneticConv}]
Observe that condition~\eqref{eq:MBound} implies that
\[
\lim_{\varepsilon \rightarrow 0}|M^\varepsilon|\varepsilon = 0.
\]
From Lemmas~\ref{lem:PBounds} and~\ref{lem:ZBounds}, along with Gaussian chaos, we thus obtain the pointwise convergence as $\varepsilon \rightarrow 0$ for any $q < \infty$ and $s,t \in [0,T]$ in $L^q$
\[
|P^\varepsilon_{s,t}| + |\Pbb^\varepsilon_{s,t} + (t-s)v^\varepsilon|^{1/2} \rightarrow 0
\]
and
\[
|Z^\varepsilon_{s,t} - W_{s,t}| + |\Z^\varepsilon_{s,t} + (t-s)v^\varepsilon - \W_{s,t}|^{1/2} \rightarrow 0.
\]
Furthermore, since $\min\{\varepsilon |t-s|^{1/2}, |t-s|\} \leq \varepsilon^{\kappa}|t-s|^{1-\kappa/2}$ for all $\kappa \in [0,1]$, condition~\eqref{eq:MBound}, Lemmas~\ref{lem:PBounds} and~\ref{lem:ZBounds}, and Gaussian chaos imply that for any $q < \infty$ there exists $C_q > 0$ such that for all $s,t \in [0,T]$ and
\begin{align*}
\sup_{\varepsilon \in (0,1]} \EEE{|P^\varepsilon_{s,t}|^q} &\leq C_q|t-s|^{q/2}, \\
\sup_{\varepsilon \in (0,1]} \EEE{|Z^\varepsilon_{s,t} - W_{s,t}|^q} &\leq C_q|t-s|^{q/2}
\end{align*}
and
\begin{align*}
\sup_{\varepsilon \in (0,1]} \EEE{|\Pbb^\varepsilon_{s,t} + (t-s)v^\varepsilon|^q} &\leq C_q|t-s|^{q(1-\kappa/2)}, \\
\sup_{\varepsilon \in (0,1]} \EEE{|\Z^\varepsilon_{s,t} + (t-s)v^\varepsilon - \W_{s,t}|^q} &\leq C_q|t-s|^{q(1-\kappa/2)}.
\end{align*}
Applying Theorem~A.13 of~\cite{FrizVictoir10} completes the proof.
\end{proof}

\section{Rough lead-lag process}\label{subsec:Hoff}

Consider a path $X : [0,1] \mapsto \R^d$. Let $n \geq 1$ be an integer and write for brevity $X^n_i = X_{i/n}$. Consider the piecewise linear path $\tilde X^n : [0,1] \mapsto \R^{2d}$ defined by
\begin{align*}
\tilde X^n_{2i/2n} &= (X^n_i, X^n_i), \\
\tilde X^n_{(2i+1)/2n} &= (X^n_i, X^n_{i+1}),
\end{align*}
and linear on the intervals $\left[\frac{2i}{2n}, \frac{2i+1}{2n}\right]$ and $\left[\frac{2i+1}{2n}, \frac{2i+2}{2n}\right]$ for all $i = 0,\ldots, n-1$. Note that this is a variant of the Hoff process considered in~\cite{Flint16}.

Denote by $\tilde \Xbf^n_{s,t} = \exp(\tilde X^n_{s,t} + \Abb^n_{s,t})$ the level-$2$ lift of $\tilde X^n$, where $\Abb^n_{s,t}$ is the $(2d) \times (2d)$ anti-symmetric L{\'e}vy area matrix given by
\[
\Abb^n_{s,t}
= \frac{1}{2}\left(\int_s^t \tilde X^n_{s,r} \otimes d\tilde X^n_r - \int_s^t \tilde X^n_{s,r} \otimes d\tilde X^n_r\right).
\]

Let $H \in (0,1)$ and consider a fractional Brownian motion $B^H$ with covariance $R(s,t) = \frac{1}{2}(t^{2H} + s^{2H} - |t-s|^{2H})$. Let $X : [0,1] \mapsto \R^d$ be $d$ independent copies of $B^H$.

Recall the definition of $T_v$ from~\eqref{eq:TvDef}. We are interested in the convergence in rough path topologies of $T_{\tilde v^n}(\tilde \Xbf^n)$ where $\tilde v^n \in \g^{(2)}(\R^{2d})$ is appropriately chosen. Define the (diagonal) $d\times d$ matrix
\[
v^n = \frac{1}{2}\EEE{\sum_{k=0}^{n-1} (X^n_{k+1}-X^n_k) \otimes (X^n_{k+1}-X^n_k)} = \frac{n^{1-2H}}{2} I,
\]
and the anti-symmetric $(2d) \times (2d)$ matrix
\[
\tilde v^n = \left( \begin{array}{cc} 0 & -v^n  \\ v^n  & 0 \end{array} \right) \in \g^{(2)}(\R^{2d}).
\]
Finally, consider the path $\tilde X = (X,X) : [0,1] \mapsto \R^{2d}$, its canonically defined L{\'e}vy area $\Abb$ (which exists for $1/4 < H \leq 1$), and its level-$2$ lift $\tilde \Xbf = \exp(\tilde X + \Abb)$. The following is the main result of this subsection.

\begin{theorem}\label{thm:HoffConv}
Suppose $1/4 < H \leq 1/2$. Then for all $\alpha \in [0, H)$ and $q < \infty$, it holds that $T_{\tilde v^n}(\tilde \Xbf^n) \rightarrow \tilde \Xbf$ in $L^q$ and $\alpha$-H{\"o}lder topology. More precisely, as $n \rightarrow \infty$, in $L^q$
\[
\sup_{s,t \in [0,T]} \frac{|\tilde X^n_{s,t} - \tilde X_{s,t}|}{|t-s|^\alpha} + \sup_{s,t \in [0,T]} \frac{|\Abb^n_{s,t} + (t-s)\tilde v^n - \Abb_{s,t}|}{|t-s|^{2\alpha}} \rightarrow 0.
\]
\end{theorem}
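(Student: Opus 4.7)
The plan follows the three-step scheme of the proof of Theorem~\ref{thm:magneticConv}: (i) establish sharp $L^2$ bounds on the increments of $\tilde X^n - \tilde X$ and of $\Abb^n + (t-s)\tilde v^n - \Abb$; (ii) upgrade to $L^q$ bounds via Gaussian hypercontractivity, since all relevant quantities live in a finite inhomogeneous Wiener chaos over the fBm $X$; (iii) invoke the Kolmogorov-type criterion, Theorem~A.13 of~\cite{FrizVictoir10}, to pass from these second-moment bounds to $\alpha$-H{\"o}lder convergence in $L^q$. The first level is essentially routine: $\tilde X^n$ differs from $\tilde X = (X,X)$ only by a lead-lag shift of at most one grid step in one coordinate, so fBm H{\"o}lder regularity gives $\| \tilde X^n_t - \tilde X_t \|_{L^2} \lesssim n^{-H}$; combining with $\| \tilde X^n_{s,t} - \tilde X_{s,t} \|_{L^2} \lesssim |t-s|^H$ and interpolating yields the required bound $\lesssim |t-s|^\alpha n^{\alpha - H}$ for any $\alpha \in [0,H]$.

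The heart of the proof is the second level. A direct computation on each grid interval $[i/n,(i+1)/n]$ shows that $\Abb^n$ equals the L{\'e}vy area of the usual chord interpolation of the diagonal path $(X,X)$, plus an extra contribution in the off-diagonal $d \times d$ blocks equal to $\pm \tfrac{1}{2} \Delta X_i \otimes \Delta X_i$, with $\Delta X_i = X^n_{i+1} - X^n_i$. Summing over the grid intervals contained in $[s,t]$, the expectation of this extra term is precisely $(t-s) \tilde v^n$, and the renormalisation removes exactly this diverging bias. The centred fluctuation is controlled by the Wick-type identity $|\textnormal{Cov}((\Delta X_i)^{\otimes 2}, (\Delta X_j)^{\otimes 2})| \lesssim \rho(|i-j|)^2 n^{-4H}$, where the fBm increment correlation satisfies $|\rho(k)| \lesssim |k|^{2H-2}$ for $k \geq 1$. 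Since $H \leq 1/2 < 3/4$, the squared correlations are summable, giving
\[
\Big\| \sum_{i=k}^{m-1} \big( (\Delta X_i)^{\otimes 2} - \EEE{(\Delta X_i)^{\otimes 2}} \big) \Big\|_{L^2} \lesssim \sqrt{m-k}\, n^{-2H} = |t-s|^{1/2} n^{1/2 - 2H}
\]
when $[s,t]$ aligns with the grid. The chord contribution converges to $\Abb_{s,t}$ at the classical rate for piecewise-linear approximation of fBm L{\'e}vy area, valid precisely for $H > 1/4$, and the boundary terms from at most two partial intervals at the endpoints of $[s,t]$ are absorbed using the first-level bounds together with the direct estimate $\| \Abb^n_{s,t} \|_{L^2} \lesssim |t-s|^{2H}$ within a single half-interval.

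The main obstacle is the interpolation between two regimes of $|t-s|$. For $|t-s| \geq 1/n$, the variance bound gives $\lesssim |t-s|^{1/2} n^{1/2 - 2H}$; for $|t-s| \leq 1/n$, each term (including $(t-s) \tilde v^n \lesssim |t-s| n^{1-2H}$) is dominated directly by $C |t-s|^{2H}$, which is possible precisely because $H > 1/4$ implies $|t-s| n^{1-2H} \lesssim |t-s|^{2H}$ in this range. A geometric-mean interpolation then yields
\[
\| \Abb^n_{s,t} + (t-s) \tilde v^n - \Abb_{s,t} \|_{L^2} \lesssim |t-s|^{2H - \theta(2H - 1/2)} \, n^{- \theta (2H - 1/2)}
\]
for any $\theta \in (0,1)$. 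For every $\alpha < H$ one may choose $\theta$ small enough that $2H - \theta(2H - 1/2) > 2\alpha$, producing a positive convergence rate $\delta = \theta(2H - 1/2) > 0$. Applying Theorem~A.13 of~\cite{FrizVictoir10} exactly as in the proof of Theorem~\ref{thm:magneticConv} then completes the argument.
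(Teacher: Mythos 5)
Your proposal is correct and follows essentially the same route as the paper: reduce to grid points where $\Abb^n$ equals the chord (piecewise-linear) L{\'e}vy area $\Ybb^n$ minus half the discrete quadratic variation (the paper's Lemma~\ref{lem:partitionPoints}), bound the centred fluctuation by $\lesssim \sqrt{k-m}\,n^{-2H}$ via Wick's formula and summability of squared increment correlations (Lemma~\ref{lem:PControlPartition}), treat boundary terms from partial grid cells separately (Lemma~\ref{lem:XYDiff}), invoke the known convergence of the piecewise-linear fBm rough path (Theorem~15.42 of~\cite{FrizVictoir10}) as a black box, and conclude with Theorem~A.13. The paper packages the regime argument slightly differently --- it proves a single uniform bound $\|\Pbb^n_{s,t}\|_{L^2} \leq C|t-s|^{2H}$ (Lemma~\ref{lem:YPBounds}) and pairs it with pointwise $L^q$ convergence rather than extracting an explicit polynomial rate $n^{-\delta}$ --- but this is cosmetic. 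One small misattribution worth correcting: the inequality $|t-s|\,n^{1-2H}\lesssim|t-s|^{2H}$ in the regime $|t-s|\le 1/n$ needs only $H\le 1/2$ (it reads $(n|t-s|)^{1-2H}\le 1$); the constraint $H>1/4$ actually enters in the regime $|t-s|\ge 1/n$, where one needs $|t-s|^{1/2}n^{1/2-2H}\le|t-s|^{2H}$, i.e. $(n|t-s|)^{1/2-2H}\le 1$ (it also underlies the existence and convergence of the chord rough path).
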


The rest of the section is devoted to the proof of Theorem~\ref{thm:HoffConv}. We first state two lemmas which are purely deterministic.

Let $Y^n : [0,1] \mapsto \R^{d}$ be the piecewise linear interpolation of $X$ over the partition $\left(0, \frac{1}{n}, \ldots, \frac{n-1}{n}, 1\right)$, let $\tilde Y^n = (Y^n,Y^n) : [0,1] \mapsto \R^{2d}$,
and let $\Ybb^n$ be the L{\'e}vy area of $\tilde Y^n$.

\begin{lemma}\label{lem:XYDiff}
Let $s \in [\frac{m}{n},\frac{m+1}{n}]$ and $t \in [\frac{k}{n}, \frac{k+1}{n}]$ with $s < t$, and define
\begin{align*}
\Delta_1 &= n\left(\frac{m+1}{n} \wedge t - s \right) |X^n_{m+1} - X^n_m|, \\
\Delta_2 &= |X^n_k - X^n_{m+1}| \; \textnormal{ if $k > m$}, \; \; 0 \textnormal{ if  $k= m$} \\
\Delta_3 &= n\left(t-\frac{k}{n}\vee s\right)|X^n_{k+1} - X^n_k|.
\end{align*}
There exists a constant $C_1 > 0$ such that for all $n \geq 1$ and $0 \leq s < t \leq 1$, it holds that
\[
|\tilde X^n_{s,t} - \tilde Y^n_{s,t}| \leq C_1\left( \Delta_1 + \Delta_3 \right).
\]
and, if $k > m$, $|\Abb^n_{s,t} - \Abb^n_{\frac{m+1}{n}, \frac{k}{n}}|$ and $|\Ybb^n_{s,t} - \Ybb^n_{\frac{m+1}{n}, \frac{k}{n}}|$ are bounded above by
\[
C_1 \left(\Delta_1^2 + (\Delta_1+ \Delta_2 + \Delta_3)\Delta_3 + \Delta_1 \Delta_2 \right).
\]
\end{lemma}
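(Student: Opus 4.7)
The statement is entirely deterministic and rests on two structural facts. First, the paths agree at every grid time: $\tilde X^n_{i/n} = \tilde Y^n_{i/n} = (X^n_i, X^n_i)$. Second, on a grid interval $[i/n, (i+1)/n]$ the path $\tilde Y^n$ is a single line segment from $(X^n_i,X^n_i)$ to $(X^n_{i+1},X^n_{i+1})$, whereas $\tilde X^n$ is a two-segment path through the intermediate vertex $(X^n_i, X^n_{i+1})$: first the second coordinate moves from $X^n_i$ to $X^n_{i+1}$ with the first held fixed, then the first moves from $X^n_i$ to $X^n_{i+1}$ with the second held at $X^n_{i+1}$.

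For the first bound I write $\tilde X^n_{s,t} - \tilde Y^n_{s,t} = (\tilde X^n_t - \tilde Y^n_t) - (\tilde X^n_s - \tilde Y^n_s)$. A short case analysis (according to whether $u$ lies in the first or second half of its grid interval) gives, for $u \in [j/n, (j+1)/n]$ and $\sigma_u := n(u - j/n)$, the elementary pointwise estimate $|\tilde X^n_u - \tilde Y^n_u| \leq \sqrt{2}\min(\sigma_u, 1-\sigma_u)|X^n_{j+1} - X^n_j|$. When $k > m$ this yields $|\tilde X^n_s - \tilde Y^n_s| \leq C\Delta_1$ and $|\tilde X^n_t - \tilde Y^n_t| \leq C\Delta_3$ directly, since $(1-\sigma_s)|X^n_{m+1} - X^n_m| = \Delta_1$ and $\sigma_t|X^n_{k+1} - X^n_k| = \Delta_3$. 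When $k = m$, $s$ and $t$ lie in the same grid interval and I instead integrate the pointwise derivative bound $|(\tilde X^n)' - (\tilde Y^n)'| \leq \sqrt{2}\, n |X^n_{m+1} - X^n_m|$ over $[s,t]$; in that case $\Delta_1 = \Delta_3 = n(t-s)|X^n_{m+1} - X^n_m|$ and the result fits into $C_1(\Delta_1 + \Delta_3)$.

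For the L\'evy area bound I apply Chen's relation at the two intermediate times $u = (m+1)/n$, $v = k/n$, yielding
\[
\Abb^n_{s,t} - \Abb^n_{u,v} = \Abb^n_{s,u} + \Abb^n_{v,t} + \tfrac{1}{2}\bigl(\tilde X^n_{s,u} \otimes \tilde X^n_{u,t} - \tilde X^n_{u,t} \otimes \tilde X^n_{s,u}\bigr) + \tfrac{1}{2}\bigl(\tilde X^n_{u,v} \otimes \tilde X^n_{v,t} - \tilde X^n_{v,t} \otimes \tilde X^n_{u,v}\bigr).
\]
The end area $\Abb^n_{s,u}$ is explicit: if $s$ lies in the second half of $[m/n,(m+1)/n]$ then $\tilde X^n|_{[s,u]}$ is a single line segment and the area vanishes; otherwise $\tilde X^n|_{[s,u]}$ consists of two perpendicular segments whose wedge has magnitude at most $(1-2\sigma_s)|X^n_{m+1}-X^n_m|^2 \leq \Delta_1^2$, using the elementary inequality $(1-2x) \leq (1-x)^2$ valid for $x \in [0,1/2]$. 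Symmetrically, $|\Abb^n_{v,t}| \leq C\Delta_3^2$. The four increments entering the cross terms are controlled by applying part (1) to the relevant sub-intervals and the obvious estimates: $|\tilde X^n_{s,u}| \leq C\Delta_1$, $|\tilde X^n_{v,t}| \leq C\Delta_3$, $|\tilde X^n_{u,v}| = \sqrt{2}\Delta_2$, and $|\tilde X^n_{u,t}| \leq |\tilde X^n_{u,v}| + |\tilde X^n_{v,t}| \leq C(\Delta_2 + \Delta_3)$. The cross contributions are therefore bounded by $C(\Delta_1\Delta_2 + \Delta_1\Delta_3 + \Delta_2\Delta_3)$, and summing produces the claimed form $\Delta_1^2 + (\Delta_1 + \Delta_2 + \Delta_3)\Delta_3 + \Delta_1\Delta_2$. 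The bound for $\Ybb^n$ uses the same Chen decomposition but is strictly simpler: since $\tilde Y^n$ is a single line segment on any sub-interval of a grid interval, the two end terms $\Ybb^n_{s,u}$ and $\Ybb^n_{v,t}$ vanish outright, and only the cross terms remain, controlled by the immediate estimates $|\tilde Y^n_{s,u}| \leq C\Delta_1$, $|\tilde Y^n_{u,v}| = \sqrt{2}\Delta_2$, $|\tilde Y^n_{v,t}| \leq C\Delta_3$.

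The only real difficulty is bookkeeping: several case distinctions (first vs.\ second half of a grid interval at each of $s, t$; $k > m$ vs.\ $k = m$) must each produce bounds fitting into the same final expression. The crucial point is the asymmetric choice $(1-\sigma_s)$ in the definition of $\Delta_1$, rather than $\min(\sigma_s,1-\sigma_s)$: this is what guarantees $\Delta_1 \geq \tfrac{1}{2}|X^n_{m+1}-X^n_m|$ in the near-saturated regime $\sigma_s \approx 0$ where the end L\'evy area $|\Abb^n_{s,u}|$ is largest, and thereby ensures the $\Delta_1^2$ control remains consistent across all cases.
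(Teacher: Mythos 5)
Your proof is correct. The paper itself only says ``Direct calculation and triangle inequality,'' and your Chen-relation decomposition at $u=(m+1)/n$, $v=k/n$, together with the explicit pointwise formula $|\tilde X^n_u - \tilde Y^n_u| = \sqrt{2}\min(\sigma_u,1-\sigma_u)\,|X^n_{j+1}-X^n_j|$ and the two-segment end-area estimates, is exactly the calculation the authors are gesturing at; the term-by-term bounds match the stated right-hand side precisely, and the remark that the asymmetric choice $(1-\sigma_s)$ in $\Delta_1$ is what keeps $\Delta_1^2$ dominant over the end area near $\sigma_s\approx 0$ is a correct and worthwhile observation that the paper leaves implicit.
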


\begin{proof}
Direct calculation and triangle inequality.
\end{proof}

The second part of the above lemma essentially allows us to work over the partition $\left(0, \frac{1}{n}, \ldots, \frac{n-1}{n}, 1\right)$, on which computations are easier.

\begin{lemma}\label{lem:partitionPoints}
Suppose $0 \leq m \leq k \leq n$.

1) For all pairs $1 \leq i,j \leq d$ and $d+1 \leq i,j \leq 2d$
\[
\left(\Abb^n_{\frac{m}{n}, \frac{k}{n}}\right)^{i,j} = \left(\Ybb^n_{\frac{m}{n}, \frac{k}{n}}\right)^{i,j}.
\]

2) For all $1 \leq i \leq d < j \leq 2d$
\[
\left(\Abb^n_{\frac{m}{n}, \frac{k}{n}}\right)^{i,j} = \left(\Ybb^n_{\frac{m}{n}, \frac{k}{n}}\right)^{i,j} - \frac{1}{2} \sum_{r=m}^{k-1} (X^{n,i}_{r+1}-X^{n,i}_r)(X^{n,j}_{r+1}-X^{n,j}_r)
\]
\end{lemma}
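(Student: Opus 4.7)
The plan is to reduce both claims, via Chen's relation, to a calculation on each elementary interval $[r/n,(r+1)/n]$, using the crucial observation that $\tilde X^n$ and $\tilde Y^n$ agree at every partition point $r/n$ (where both equal $(X^n_r,X^n_r)$). Consequently the increments of the two paths over any pair of partition points coincide. Iterating Chen's relation expresses $\Abb^n_{m/n,k/n}$ as $\sum_{r=m}^{k-1}\Abb^n_{r/n,(r+1)/n}$ plus ``cross wedge'' terms of the form $\tilde X^n_{r/n,r'/n}\otimes\tilde X^n_{r'/n,r''/n}-\tilde X^n_{r'/n,r''/n}\otimes\tilde X^n_{r/n,r'/n}$, and likewise for $\Ybb^n$. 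Since these cross terms only involve the common partition-point increments, they cancel in the difference, giving $\Abb^n_{m/n,k/n}-\Ybb^n_{m/n,k/n}=\sum_{r=m}^{k-1}(\Abb^n-\Ybb^n)_{r/n,(r+1)/n}$. Thus it suffices to analyse one elementary interval.

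For part 1, fix $i,j$ both in $\{1,\ldots,d\}$ (the case $d+1\le i,j\le 2d$ is symmetric). Restricted to its first $d$ coordinates, $\tilde X^n$ sits at $X^n_r$ on $[\frac{r}{n},\frac{2r+1}{2n}]$ and then runs linearly to $X^n_{r+1}$; restricted to its first $d$ coordinates, $\tilde Y^n=Y^n$ runs linearly from $X^n_r$ to $X^n_{r+1}$ at uniform speed. In both cases the projection to the $(i,j)$-plane is a reparameterization of the single chord from $(X^{n,i}_r,X^{n,j}_r)$ to $(X^{n,i}_{r+1},X^{n,j}_{r+1})$, and for any reparameterization of a straight segment $\int X_{s,\cdot}\otimes dX=\tfrac{1}{2}(X_{s,t})^{\otimes 2}$ is symmetric. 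Hence both $\Abb^n$ and $\Ybb^n$ vanish in these indices on the elementary interval, and summing gives part 1.

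For part 2, fix $i\le d<j$ and set $\Delta u=X^{n,i}_{r+1}-X^{n,i}_r$ and $\Delta v=X^{n,j}_{r+1}-X^{n,j}_r$ (with the natural identification of the second copy of $\R^d$ with $\R^d$ for indices $j>d$). On the elementary interval, the projection of $\tilde X^n$ to the $(i,j)$-plane is the L-shaped path that first moves from $(X^{n,i}_r,X^{n,j}_r)$ to $(X^{n,i}_r,X^{n,j}_{r+1})$ with the first copy frozen, and then from there to $(X^{n,i}_{r+1},X^{n,j}_{r+1})$ with the second copy frozen. A direct computation yields $\int\tilde X^{n,i}_{s,\cdot}\,d\tilde X^{n,j}=0$ (because $\tilde X^{n,i}_{s,\cdot}$ vanishes on the half-interval where $d\tilde X^{n,j}$ is supported) and $\int\tilde X^{n,j}_{s,\cdot}\,d\tilde X^{n,i}=\Delta v\,\Delta u$ (because the $j$-coordinate has already advanced by $\Delta v$ before the $i$-coordinate moves by $\Delta u$). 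This gives $(\Abb^n)^{i,j}_{r/n,(r+1)/n}=-\tfrac{1}{2}\Delta u\,\Delta v$, while $(\Ybb^n)^{i,j}_{r/n,(r+1)/n}=0$ by part 1's straight-segment argument. Summing over $r=m,\ldots,k-1$ produces the stated identity.

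The only real care is needed in the elementary L-shape computation (and in tracking the sign convention in the antisymmetrization that defines $\Abb^n$); the rest is bookkeeping via Chen's relation plus the matching of the two paths at partition points.
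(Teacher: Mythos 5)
Your proof is correct, and it takes a genuinely different route from the paper's. The paper computes the two global lead--lag integrals $\int_{m/n}^{k/n} M^{n,i}_{m/n,\cdot}\,dN^{n,j}$ and $\int_{m/n}^{k/n} N^{n,j}_{m/n,\cdot}\,dM^{n,i}$ explicitly as sums of products of increments, separately obtains $\Ybb^n$ from the CBH expansion of the signature $e^{X_{m+1}-X_m}\cdots e^{X_k-X_{k-1}}$, and then subtracts. You instead iterate Chen's relation to reduce both areas over $[m/n,k/n]$ to a telescoping sum over elementary intervals; since $\tilde X^n$ and $\tilde Y^n$ coincide at all partition points $r/n$ (and hence have identical increments between partition points), the cross--wedge terms cancel in the difference, localizing everything to a single mesh cell. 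There the L--shape computation gives $(\Abb^n)^{i,j}_{r/n,(r+1)/n} = -\tfrac12\,\Delta u\,\Delta v$ while $(\Ybb^n)^{i,j}_{r/n,(r+1)/n}=0$, which sums to the claimed identity. Your version makes the cancellation mechanism structurally transparent (the only contribution is per--cell), whereas the paper's is a more head--on algebraic calculation; both hinge on the same underlying CBH/Chen algebra, and both are equally rigorous.
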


\begin{proof}
Denote $\tilde X^n = (M^n, N^n)$, so that $M^n$ is the lag component, and $N^n$ is the lead. The first equality is clear since $M^n$ and $N^n$ are simply reparametrisations of $Y^n$ over the interval $[\frac{m}{n},\frac{k}{n}]$. For the second, observe that
\[
\int_{m/n}^{k/n} M^{n,i}_{m/n, r} dN^{n,j}_r = \sum_{r=m}^{k-1} (X^{n,i}_{r}-X^{n,i}_{m}) (X^{n,j}_{r+1} - X^{n,j}_{r})
\]
and
\[
\int_{m/n}^{k/n} N^{n,j}_{m/n, r} dM^{n,i}_r = \sum_{r=m}^{k-1} (X^{n,j}_{r+1}-X^{n,j}_{m}) (X^{n,i}_{r+1} - X^{n,i}_{r}).
\]
Remark now that the signature of $Y^n$ over $[\frac{m}{n},\frac{k}{n}]$ is
\[
e^{X_{m+1} - X_{m}} \ldots e^{X_k - X_{k-1}},
\]
so that a calculation with the CBH formula gives
\begin{align*}
\left(\Ybb^n_{m/n,k/n}\right)^{i,j}
&= \frac{1}{2}\sum_{r=m}^{k-1} (X^{n,i}_r-X^{n,i}_m)(X^{n,j}_{r+1}-X^{n,j}_r) - (X^{n,j}_r-X^{n,j}_m)(X^{n,i}_{r+1}-X^{n,i}_r).
\end{align*}
Using the fact that
\begin{align*}
\left(\Abb^n_{m/n,k/n}\right)^{i,j} &= \frac{1}{2}\left( \int_{m/n}^{k/n} M^{n,i}_{m/n, r} dN^{n,j}_r - \int_{m/n}^{k/n} N^{n,j}_{m/n, r} dM^{n,i}_r \right),
\end{align*}
the conclusion readily follows.
\end{proof}

We now return to the specific case that $X : [0,1] \mapsto \R^d$ is given by $d$-independent copies of a fractional Brownian motion with Hurst parameter $H \in (0,1)$. In particular, this implies that for all $s,t,a,b \in [0,1]$
\begin{equation}\label{eq:covariance}
\EEE{(X^i_t-X^i_s)(X^j_b-X^j_a)} = \delta_{i,j} \frac{1}{2}(|t-a|^{2H} + |s-b|^{2H} - |t-b|^{2H} - |s-a|^{2H}).
\end{equation}
Consider the $(2d) \times (2d)$ anti-symmetric matrix
\[
\Pbb^n_{s,t} = \Abb^n_{s,t} - \Ybb^n_{s,t} + (t-s)\tilde v^n.
\]

\begin{lemma}\label{lem:PControlPartition}
There exists $C_2 > 0$ such that for all $H \leq 1/2$, $n \geq 1$, and $0 \leq m \leq k \leq n$
\[
\norm{\Pbb^n_{m/n,k/n}}_{L^2} \leq C_2 \frac{(k-m)^{1/2}}{n^{2H}}.
\]
\end{lemma}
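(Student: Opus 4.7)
The plan is to exploit the block structure of $\tilde v^n$ together with Lemma~\ref{lem:partitionPoints} to reduce the problem to controlling, for each cross pair of indices $1 \leq i \leq d < j \leq 2d$, a single sum of products of fBm increments.  First I would observe that the upper-left and lower-right $d \times d$ blocks of $\tilde v^n$ are zero by construction, and Lemma~\ref{lem:partitionPoints}~(1) says the corresponding entries of $\mathbb{A}^n_{m/n,k/n}$ and $\mathbb{Y}^n_{m/n,k/n}$ agree; hence these blocks of $\mathbb{P}^n_{m/n,k/n}$ vanish identically. For a cross pair $1 \leq i \leq d < j \leq 2d$, write $j' = j-d$ and $\xi^i_r = X^{n,i}_{r+1} - X^{n,i}_r$. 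Part~(2) of Lemma~\ref{lem:partitionPoints} gives
\[
(\mathbb{A}^n - \mathbb{Y}^n)^{i,j}_{m/n,k/n} = -\frac{1}{2}\sum_{r=m}^{k-1} \xi^i_r \xi^{j'}_r,
\]
while the explicit computation $v^n = \tfrac12 n^{1-2H} I$ gives $\tfrac{k-m}{n}(\tilde v^n)^{i,j} = -\tfrac{(k-m)n^{-2H}}{2}\delta_{i,j'}$. Since $\mathbb{E}[\xi^i_r \xi^{j'}_r] = \delta_{i,j'} n^{-2H}$, the renormaliser exactly cancels the expectation, and
\[
(\mathbb{P}^n)^{i,j}_{m/n,k/n} = -\frac{1}{2}\sum_{r=m}^{k-1} \bigl(\xi^i_r \xi^{j'}_r - \mathbb{E}[\xi^i_r \xi^{j'}_r]\bigr).
\]

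Next I would compute the second moment using Isserlis/Wick's formula for centered Gaussians. Distinguishing the cases $i \neq j'$ (where $\xi^i$ and $\xi^{j'}$ are independent and $\mathbb{E}[\xi^i_r \xi^{j'}_r] = 0$) from $i = j'$, in both cases the covariance of centred quadratic products collapses to
\[
\mathrm{Cov}\bigl(\xi^i_r \xi^{j'}_r,\,\xi^i_s \xi^{j'}_s\bigr) \leq C\, \rho_{r,s}^2,\qquad \rho_{r,s} := \mathbb{E}[\xi^i_r \xi^i_s].
\]
A direct calculation from the fBm covariance in~\eqref{eq:covariance} gives $\rho_{r,s} = n^{-2H}\gamma(r-s)$ with
\[
\gamma(k) = \tfrac{1}{2}\bigl(|k+1|^{2H} + |k-1|^{2H} - 2|k|^{2H}\bigr),
\]
the standard increment covariance of fBm. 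Hence
\[
\bigl\|(\mathbb{P}^n)^{i,j}_{m/n,k/n}\bigr\|_{L^2}^2 \leq C\, n^{-4H} \sum_{r,s=m}^{k-1}\gamma(r-s)^2.
\]

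Finally I would invoke the summability of $\gamma$ in $\ell^2(\mathbb{Z})$. Taylor expansion at infinity shows $\gamma(k) = O(|k|^{2H-2})$ as $|k| \to \infty$, so $\sum_{k \in \mathbb{Z}} \gamma(k)^2 < \infty$ precisely when $H < 3/4$, which is guaranteed here since $H \leq 1/2$. Consequently
\[
\sum_{r,s=m}^{k-1}\gamma(r-s)^2 \leq (k-m)\sum_{\ell \in \mathbb{Z}}\gamma(\ell)^2 \leq C\,(k-m),
\]
yielding $\|(\mathbb{P}^n)^{i,j}_{m/n,k/n}\|_{L^2} \leq C\,(k-m)^{1/2} n^{-2H}$, and summing over the finitely many pairs $(i,j)$ gives the claim. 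The only genuinely delicate point is checking that the renormaliser $\tilde v^n$ produces exactly the required centring on the cross block; the remaining steps are essentially bookkeeping with the covariance of fractional Brownian increments.
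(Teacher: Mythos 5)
Your proposal is correct and follows essentially the same route as the paper's proof: reduce via Lemma~\ref{lem:partitionPoints}(2) to the centred sum of products of fBm increments (with $\tilde v^n$ exactly providing the mean), compute the second moment by Wick/Isserlis distinguishing $i \neq j'$ from $i=j'$, and control the resulting double sum via the $\ell^2$-summability of the increment covariance $\gamma(\cdot)$, which holds since $H \le 1/2 < 3/4$. The paper packages the double sum as the quantity $\psi(n,K) = \tfrac{n^{-4H}}{4}\sum_{x}(K-|x|)(|x+1|^{2H}+|x-1|^{2H}-2|x|^{2H})^2$ and states the bound $\psi(n,K)\le C K n^{-4H}$ directly, while you make explicit the observation $\sum_{r,s}\gamma(r-s)^2 \le (k-m)\sum_{\ell\in\mathbb Z}\gamma(\ell)^2$ together with $\gamma(k)=O(|k|^{2H-2})$; these are the same estimate. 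You also spell out, more explicitly than the paper, that the diagonal $d\times d$ blocks of $\Pbb^n$ vanish by Lemma~\ref{lem:partitionPoints}(1) and the block structure of $\tilde v^n$ — a helpful but unessential clarification.
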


\begin{proof}
Denote $K = k-m$. By part (2) of Lemma~\ref{lem:partitionPoints}, we have
\[
\norms{\Pbb^n_{m/n,k/n}} \leq \sum_{i,j=1}^d \norms{\sum_{r=m}^{k-1} (X^{n,i}_{r+1} - X^{n,i}_r)(X^{n,j}_{r+1}- X^{n,j}_r) - \frac{K}{n}v^n_{i,j}}.
\]
Observe moreover that
\[
\frac{K}{n}v^n_{i,j} = \EEE{\sum_{r=m}^{k-1}(X^{n,i}_{r+1} - X^{n,i}_r)(X^{n,j}_{r+1}- X^{n,j}_r)} = \delta_{i,j} K n^{-2H},
\]
and that for all $r,\ell \in \{0,\ldots, n-1\}$
\[
\EEE{(X^{n,i}_{r+1} - X^{n,i}_r)(X^{n,i}_{\ell+1} - X^{n,i}_\ell)} = \frac{n^{-2H}}{2}(|r-\ell+1|^{2H} + |r-\ell-1)|^{2H} - 2|r-\ell|^{2H}).
\]
Then for all $i \neq j$, by independence of the components of $X$,
\begin{align*}
& \EEE{\left(\sum_{r=m}^{k-1}  (X^{n,i}_{r+1} - X^{n,i}_r)(X^{n,j}_{r+1}- X^{n,j}_r) \right)^2}
\\ &= \EEE{\sum_{r=m}^{k-1} \sum_{\ell=m}^{k-1} X^i_{r,r+1}X^i_{\ell,\ell+1} X^j_{r,r+1} X^j_{\ell,\ell+1}} \\
&= \sum_{r=m}^{k-1} \sum_{\ell=m}^{k-1} \EEE{(X^{n,i}_{r+1} - X^{n,i}_r)(X^{n,i}_{\ell+1} - X^{n,i}_{\ell})}^2 \\
&= \sum_{r=m}^{k-1} \sum_{\ell=m}^{k-1} \frac{n^{-4H}}{4}\left(|r-\ell+1|^{2H} + |r-\ell-1|^{2H} - 2|r-\ell|^{2H} \right)^2 \\
&= \frac{n^{-4H}}{4} \sum_{x=-K+1}^{K-1} (K-|x|)(|x+1|^{2H} + |x-1|^{2H} - 2x^{2H})^2 \\
&=: \psi(n,K).
\end{align*}
Likewise for $i=j$, by Wick's formula,
\begin{align*}
\EEE{\left(\sum_{r=m}^{k-1} (X^{n,i}_{r+1} - X^{n,i}_r)^2 \right)^2}
&= \sum_{r=m}^{k-1} \sum_{\ell=m}^{k-1} \EEE{(X^{n,i}_{r+1} - X^{n,i}_{r})^2} \EEE{(X^{n,i}_{\ell+1} - X^{n,i}_{\ell})^2} \\
&+ 2\EEE{(X^{n,i}_{r+1} - X^{n,i}_{r})(X^{n,i}_{\ell+1} - X^{n,i}_{\ell})}^2  \\
&= K^2 n^{-4H} + 2\psi(n,K) \\
&= \left(\frac{K}{n}v^n_{i,i}\right)^2 + 2\psi(n,K).
\end{align*}
It hence follows that
\[
\norm{\Pbb^n_{m/n, k/n}}_{L^2}^2 \leq C_3\psi(n,K).
\]
The conclusion now follows since one can readily show that there exists $C_4 > 0$ such that for all $H \leq 1/2$, $n \geq 1$, and $0 \leq K \leq n$
\[
\psi(n,K) \leq C_4K n^{-4H}
\]
(in fact the inequality holds for all $H < 3/4$, though with a constant in general depending on $H$).
\end{proof}

\begin{lemma}\label{lem:YPBounds}
There exists $C_5 > 0$ such that for all $H \in \left(\frac{1}{4},\frac{1}{2}\right]$, $n \geq 1$ and $0 \leq s < t \leq 1$
\[
\norm{\Pbb^n_{s,t}}_{L^2} \leq C_5|t-s|^{2H}.
\]
\end{lemma}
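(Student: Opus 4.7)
The plan is to reduce the general estimate to the partition-point estimate from Lemma~\ref{lem:PControlPartition} by controlling the boundary error via Lemma~\ref{lem:XYDiff}. Fix $0 \leq s < t \leq 1$ and let $m,k$ be integers with $s \in [m/n,(m+1)/n]$ and $t \in [k/n,(k+1)/n]$. Set $\tau_s = (m+1)/n - s$ and $\tau_t = t - k/n$ (both in $[0,1/n]$), so that $t-s - (k-m-1)/n = \tau_s + \tau_t$.

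When $k > m$, subtracting the defining relation of $\Pbb^n$ at the partition points from the one at $(s,t)$ yields the decomposition
\[
\Pbb^n_{s,t} = \Pbb^n_{(m+1)/n, k/n} + \left(\Abb^n_{s,t} - \Abb^n_{(m+1)/n, k/n}\right) - \left(\Ybb^n_{s,t} - \Ybb^n_{(m+1)/n, k/n}\right) + (\tau_s + \tau_t)\tilde v^n.
\]
First, Lemma~\ref{lem:PControlPartition} bounds the partition-aligned term by $C(k-m-1)^{1/2}n^{-2H}$, and for $H > 1/4$ the elementary inequality $K^{1/2} \leq K^{2H}$ for integers $K \geq 1$ gives the desired bound $C(t-s)^{2H}$ (noting that the term vanishes when $k=m+1$). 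Next, Lemma~\ref{lem:XYDiff} bounds both $\norms{\Abb^n_{s,t} - \Abb^n_{(m+1)/n,k/n}}$ and $\norms{\Ybb^n_{s,t} - \Ybb^n_{(m+1)/n,k/n}}$ by a sum of products of the $\Delta_i$. Using $n\tau_s, n\tau_t \leq 1$, scaling of fBM increments, and Gaussian hypercontractivity, I will show $\norm{\Delta_1}_{L^2}, \norm{\Delta_3}_{L^2} \leq Cn^{-H}$ and $\norm{\Delta_2}_{L^2} \leq C((k-m-1)/n)^H \leq C(t-s)^H$, so that by Cauchy--Schwarz each quadratic product is $\leq C(t-s)^{2H}$ whenever $t-s \geq 1/n$, while for $t-s < 1/n$ (forcing $k=m+1$ and $\Delta_2 = 0$) the sharper bounds $\norm{\Delta_i}_{L^2} \leq Cn\tau_\cdot \cdot n^{-H}$ yield the same conclusion. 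Finally $(\tau_s+\tau_t)|\tilde v^n| \leq C(\tau_s+\tau_t)n^{1-2H}$ is bounded by $C(t-s)^{2H}$ by distinguishing whether $t-s \geq 1/n$ (use $\tau_s+\tau_t \leq 2/n$) or $t-s < 1/n$ (use $\tau_s+\tau_t = t-s$ and $n(t-s) \leq 1$).

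It remains to handle the case $k = m$, where Lemma~\ref{lem:XYDiff} is silent. Here $\tilde Y^n$ is affine on $[m/n,(m+1)/n]$, so $\Ybb^n_{s,t} = 0$. For $\tilde X^n$, either $[s,t]$ lies in one of the two linear sub-segments of $[m/n,(m+1)/n]$ (so $\Abb^n_{s,t} = 0$) or it straddles the breakpoint $(2m+1)/(2n)$. In the latter sub-case I will set $\alpha = 2n\bigl((2m+1)/(2n)-s\bigr)$ and $\beta = 2n\bigl(t-(2m+1)/(2n)\bigr) \in [0,1]$ so that $t-s = (\alpha+\beta)/(2n)$, and compute directly that $\Abb^n_{s,t}$ has size $\lesssim \alpha\beta |X^n_{m+1}-X^n_m|^2$. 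The bound $\alpha\beta \leq (\alpha+\beta)^2/4 \leq C(\alpha+\beta)^{2H}$ (valid for $H \leq 1$ since $\alpha+\beta \leq 2$) together with $\norm{|X^n_{m+1}-X^n_m|^2}_{L^2} \leq C n^{-2H}$ then gives $\norm{\Abb^n_{s,t}}_{L^2} \leq C(t-s)^{2H}$, and the analogous estimate $(t-s)|\tilde v^n| \leq C(t-s)^{2H}$ follows from $(t-s)n \leq 1$.

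The main obstacle is the bookkeeping in the two degenerate sub-cases, namely $k=m$ and $k=m+1$ with $t-s$ small: there the partition decomposition collapses, and one must rely on the precise piecewise-linear structure of $\tilde X^n$ and on the cancellation $\alpha\beta \leq C(\alpha+\beta)^{2H}$ to avoid a loss. The quadratic variation renormalisation term $(t-s)\tilde v^n$ must be balanced against these boundary contributions exactly, since $|\tilde v^n|$ diverges like $n^{1-2H}$ as $n\to\infty$.
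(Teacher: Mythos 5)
Your proof is correct and follows essentially the same route as the paper's: decompose $\Pbb^n_{s,t}$ into the partition-aligned piece $\Pbb^n_{(m+1)/n,k/n}$, controlled by Lemma~\ref{lem:PControlPartition} and $K^{1/2}\le K^{2H}$ (valid precisely for $H\ge 1/4$), plus the boundary corrections controlled by Lemma~\ref{lem:XYDiff}, the $\Delta_i$ bounds, Gaussian hypercontractivity, and the renormalisation term $(\tau_s+\tau_t)|\tilde v^n|$.

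The one place where you genuinely diverge from, and in fact improve on, the paper is the $k=m$ case. The paper asserts that $\Abb^n_{s,t}=0$ when $s,t$ lie in the same mesh interval. This is false as stated: if $s<\frac{2m+1}{2n}<t$, the path $\tilde X^n$ restricted to $[s,t]$ is an ``L-shaped'' piecewise-linear path (only the lead component moves, then only the lag component moves), whose L\'evy area in the mixed blocks $(i,j)$ with $i\le d<j$ equals $-\frac{1}{2}\alpha\beta\,(X^{n,i}_{m+1}-X^{n,i}_m)(X^{n,j-d}_{m+1}-X^{n,j-d}_m)$ with $\alpha,\beta$ exactly as you define them. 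Your direct computation, together with $\alpha\beta\le(\alpha+\beta)^{2H}$ (using $\alpha+\beta\le 2$ and $2H\le 2$) and $\alpha+\beta=2n(t-s)$, correctly shows $\norm{\Abb^n_{s,t}}_{L^2}\lesssim(t-s)^{2H}$, so the lemma is unaffected; you have simply patched a small slip in the paper's argument. The remaining bookkeeping in the $k=m+1$, $t-s<1/n$ sub-case is carried out correctly, and the use of $(t-s)|\tilde v^n|\le C(t-s)^{2H}$ via $n(t-s)\le1$ and $H\le 1/2$ matches the paper.
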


\begin{proof}
Suppose $s \in [\frac{m}{n},\frac{m+1}{n}]$ and $t \in [\frac{k}{n}, \frac{k+1}{n}]$.
If $m=k$, then $\Abb^n_{s,t} = \Ybb^n_{s,t} = 0$ and $|t-s| < n^{-1}$, so that
\[
|\Pbb^n_{s,t}| = \left(t- s\right)|\tilde v^n| \leq |t-s|^{2H}.
\]
For the case $k > m$, following the notation of Lemma~\ref{lem:XYDiff}, note that $\EEE{\Delta_1^2}$ and $\EEE{\Delta_3^2}$ are bounded above by
\[
n^{-2H} \min\{n^2|t-s|^2,1\}.
\]
It readily follows that for all $\ell \in \{1,2,3\}$
\[
\EEE{\Delta^2_\ell} \leq |t-s|^{2H}.
\]
Moreover, we have
\[
\left(t-\frac{k}{n} + \frac{m+1}{n} - s\right)|\tilde v^n| \leq \min \{|t-s|, n^{-1} \} n^{1-2H} \leq |t-s|^{2H}.
\]
Hence Lemma~\ref{lem:XYDiff} implies that
\begin{multline*}
|\Pbb^n_{s,t} - \Pbb^n_{(m+1)/n, k/n}| \leq 2C_1\left(\Delta_1^2 + (\Delta_1+ \Delta_2 + \Delta_3)\Delta_3 + \Delta_1 \Delta_2 \right) \\ + \left(t-\frac{k}{n} + \frac{m+1}{n} - s\right)|\tilde v^n|,
\end{multline*}
and so by Gaussian chaos
\[
\norm{\Pbb^n_{s,t} - \Pbb^n_{(m+1)/n, k/n}}_{L^2} \leq C_6|t-s|^{2H}.
\]
The conclusion now follows from Lemma~\ref{lem:PControlPartition} since $(k-m-1)^{1/2}n^{-2H} \leq |t-s|^{2H}$ for all $H \geq 1/4$.
\end{proof}

\begin{proof}[Proof of Theorem~\ref{thm:HoffConv}]
Let $0 \leq s < t \leq 1$. We observe that as $n \rightarrow \infty$, it readily follows from Lemmas~\ref{lem:XYDiff} and~\ref{lem:PControlPartition} that in $L^q$
\[
|\tilde X^n_{s,t} - \tilde Y^n_{s,t}| \rightarrow 0
\]
and
\[
|\Abb^n_{s,t} + (t-s)\tilde v^n - \Ybb^n_{s,t}| \rightarrow 0.
\]
Furthermore, by Gaussian chaos, Lemma~\ref{lem:XYDiff} implies
\[
\sup_{n \geq 1} \EEE{|\tilde X^n_{s,t} - \tilde Y^n_{s,t}|^q} \leq C_q |t-s|^{qH},
\]
while Lemma~\ref{lem:YPBounds} implies
\[
\sup_{n \geq 1} \EEE{|\Abb^n_{s,t} + (t-s)\tilde v^n - \Ybb^n_{s,t}|^q} \leq C_q |t-s|^{2qH}.
\]
Applying Theorem~A.13 of~\cite{FrizVictoir10}, and the fact that $\exp(\tilde Y^n + \Ybb^n) \rightarrow \tilde \Xbf$ in $\alpha$-H{\"o}lder topology in $L^q$ (\cite{FrizVictoir10} Theorem~15.42), completes the proof.
\end{proof}

\bibliographystyle{plain}
\bibliography{AllRefs}

\end{document}